\newtheorem{thm}{Theorem}[section]
\newtheorem{lem}[thm]{Lemma}
\newtheorem{prop}[thm]{Proposition}
\theoremstyle{definition}
\numberwithin{equation}{section}
\def\Res{\operatorname{Res}}
\begin{document}


\baselineskip=17pt



\title[On the number of subgroups]{On the number of subgroups of the group $\mathbb{Z}_{m_{1}} \times \mathbb{Z}_{m_{2}}$ with $m_{1}m_{2}\leq x$ such that $m_{1}m_{2}$ is a $k$-th power}

\author[Y. Sui]{Yankun Sui}
\address{College of Information Engineering\\
Nanjing Xiaozhuang University\\
Nanjing 211171, P. R. China}
\email{yankun.sui.math@gmail.com}

\author[D. Liu]{Dan Liu}
\address{School of Science\\
Qingdao University of Technology\\
Qingdao 266525, P. R. China}
\email{liudan92chn@163.com}

\author[B. Zhou]{Boling Zhou}
\address{College of Information Engineering\\
Nanjing Xiaozhuang University\\
Nanjing 211171, P. R. China}
\email{bolingzhou09010@163.com}

\date{}

\footnotetext{
{\bf Mathematics Subject Classification:} {Primary 11A25, 11N37; Secondary 20K01, 20K27.}

{\bf Keywords:} {number of subgroups, Dirichlet series, Perron formula, asymptotic formula, error term}

{This work is supported by National Natural Science Foundation of China (Grant No. 12301006, 12471009) and Beijing Natural Science Foundation (Grant No. 1242003).}
 }

\begin{abstract}
Let ${\Bbb Z}_{m}$ be the additive group of residue classes modulo $m$ and $s(m_{1},m_{2})$ denote
the number of subgroups of the group ${\Bbb Z}_{m_{1}}\times {\Bbb Z}_{m_{2}}$, where $m_{1}$ and $m_{2}$ are arbitrary positive integers.
We consider sums of type $\sum\limits_{\substack{m_{1}m_{2}\leq x \\ m_{1}m_{2}\in N_{k}}}s(m_{1},m_{2})$, where $N_{k}$ is the set of $k$-th power of natural numbers. In particular, we deduce asymptotic formulas with $k=2$ and $k=3$.
\end{abstract}

\maketitle

\section{\bf Introduction}

Let $\tau(n)$ be the number of divisors of $n$ and $\mathbb{Z}_{m}$ denote the additive group of residue classes modulo $m$. For arbitrary positive integers $m_{1}$ and $m_{2}$, consider the group $\mathbb{Z}_{m_{1}} \times \mathbb{Z}_{m_{2}}$. For more details about the group $\mathbb{Z}_{m_{1}} \times \mathbb{Z}_{m_{2}}$, see \cite{NT}. Let $s(m_{1},m_{2})$ denote the total number of subgroups of the group $\mathbb{Z}_{m_{1}} \times \mathbb{Z}_{m_{2}}$. Hampejs, Holighaus, T\'{o}th and Wiesmeyr \cite{H} studied the properties of the subgroups of the group $\mathbb{Z}_{m_{1}} \times \mathbb{Z}_{m_{2}}$ and derived a simple method to compute $s(m_{1},m_{2})$. For all $m_{1},m_{2} \in \mathbb{N}$, they showed that
\begin{equation}\label{s(m,n)}
s(m_{1},m_{2})=\sum_{d|m_{1},e|m_{2}}\gcd(d,e)\\
=\sum_{d|\gcd(m_{1},m_{2})}\phi(d)\tau(m_{1}/d)\tau(m_{2}/d),
\end{equation}
where $\gcd$ is the greatest common divisor, $\phi$ is Euler's totient function.

Nowak and T\'{o}th \cite{NT} took the lead in studing the average order of the function $s(m_{1},m_{2})$. They proved that
\begin{equation}\label{s m n}
\sum_{m_{1},m_{2}\leq x}s(m_{1},m_{2})=x^{2}\sum^{3}_{j=0}A_{j}\log^{j}x+O(x^{1117/701+\varepsilon}),
\end{equation}
where $A_{j}\ (0\leq j\leq 3)$ are explicit constants, whose definitions are omitted here. Later T\'{o}th and Zhai \cite{TZh} improved the error term in (\ref{s m n}) to $O(x^{1.5}(\log x)^{6.5})$.

More recently, Sui and Liu \cite{SL} considered the sum of $s(m,n)$ in the Dirichlet region $\{(m,n), mn\leq x\}$. Suppose $x>0$ is a real number, they proved that
\begin{equation}
\sum_{mn\leq x}s(m,n)=xP_{4}(\log x)+O(x^{2/3}\log^{6}x),
\end{equation}
where $P_{4}(u)$ is a polynomial in $u$ of degree 4 with the leading coefficients $1/(8\pi^{2})$.

In this paper, we focus on the following problem. Let $k\geq 2$ be a fixed integer and $N_{k}$ denote the set of $k$-th power of natural numbers. For arbitrary positive integers $m_{1}$ and $m_{2}$, we consider the number of subgroups of the group $\mathbb{Z}_{m_{1}} \times \mathbb{Z}_{m_{2}}$ with $m_{1}m_{2}\leq x$ such that $m_{1}m_{2}$ is a $k$-th power number.
For $x\geq1$, we define
\begin{align*}
    T_{k}(x):=\sum_{\substack{m_{1}m_{2}\leq x \\ m_{1}m_{2}\in N_{k}}}s(m_{1},m_{2}).
\end{align*}
The greater the value of $k$, the more complex the calculation. So, in this article, we first study two cases where the product of $m_{1}m_{2}$ is a square number and a cube number. The purpose of this paper is to obtain asymptotic formulas of $T_{2}(x)$ and $T_{3}(x)$, by using the complex integration method.

\ \

\begin{thm} \label{Th_1} For $k=2$, we have the asymptotic formula
\begin{equation}
T_{2}(x)=c_{2}x+O(x^{19/28+\varepsilon}),
\end{equation}
where $c_{2}$ is an absolute constant.
\end{thm}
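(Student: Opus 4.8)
The plan is to collapse the double sum onto the single variable $n=m_{1}m_{2}$ and then use contour integration. Put $g(n):=\sum_{m_{1}m_{2}=n}s(m_{1},m_{2})$, so that $T_{2}(x)=\sum_{n\le x,\,n=\square}g(n)$. Summing the second expression in \eqref{s(m,n)} against $(m_{1}m_{2})^{-s}$ and writing $m_{1}=da_{1},\ m_{2}=da_{2}$ factorises the resulting Dirichlet series as
\[
\sum_{n\ge1}\frac{g(n)}{n^{s}}=\sum_{m_{1},m_{2}\ge1}\frac{s(m_{1},m_{2})}{(m_{1}m_{2})^{s}}=\Bigl(\sum_{d}\frac{\phi(d)}{d^{2s}}\Bigr)\Bigl(\sum_{a}\frac{\tau(a)}{a^{s}}\Bigr)^{2}=\frac{\zeta(2s-1)\,\zeta(s)^{4}}{\zeta(2s)}.
\]

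Next I would extract the square-supported coefficients. Since $g$ is multiplicative its local generating function is $G_{p}(u)=\sum_{j\ge0}g(p^{j})u^{j}=\dfrac{1+u}{(1-pu^{2})(1-u)^{3}}$ with $u=p^{-s}$; forming the even part $\tfrac12\bigl(G_{p}(u)+G_{p}(-u)\bigr)$ and substituting $u^{2}=p^{-2s}$ produces the local factor $\dfrac{1+6p^{-2s}+p^{-4s}}{(1-p^{1-2s})(1-p^{-2s})^{3}}$ of $\sum_{\ell}g(\ell^{2})\ell^{-2s}$. Multiplying over all primes gives
\[
F(s):=\sum_{\substack{n\ge1\\ n=\square}}\frac{g(n)}{n^{s}}=\zeta(2s-1)\,\zeta(2s)^{9}\,U(2s),\qquad U(w)=\prod_{p}\bigl(1+6p^{-w}+p^{-2w}\bigr)(1-p^{-w})^{6},
\]
where $U(w)=\prod_{p}(1-20p^{-2w}+\cdots)$ converges absolutely and is bounded for $\mathrm{Re}(w)>\tfrac12+\varepsilon$. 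Hence $F$ continues to $\mathrm{Re}(s)>\tfrac14$ with a simple pole at $s=1$ and a pole of order nine at $s=\tfrac12$, and no other singularity.

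I would then invoke Perron's formula truncated at height $T$, integrating $F(s)x^{s}/s$ along $\mathrm{Re}(s)=1+\varepsilon$, and shift the contour to $\mathrm{Re}(s)=b$ with $\tfrac12<b<1$. The only pole met is the simple one at $s=1$, whose residue $\tfrac12\zeta(2)^{9}U(2)\,x=:c_{2}x$ is exactly the asserted main term. On the new line $\mathrm{Re}(2s)=2b>1$, so $\zeta(2s)^{9}U(2s)=O_{b}(1)$ and the growth of $F$ is governed solely by the shifted factor $\zeta(2s-1)$; the convexity bound $\zeta(\sigma+i\tau)\ll(1+|\tau|)^{(1-\sigma)/2+\varepsilon}$ applied with $\sigma=2b-1$ gives $F(b+it)\ll_{b}|t|^{\,1-b+\varepsilon}$. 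Consequently the vertical integral is $\ll x^{b}T^{1-b+\varepsilon}$, while the horizontal segments and the Perron truncation error are $\ll x^{1+\varepsilon}/T$ (the terms with $n$ near $x$ contributing only $O(x^{1/2+\varepsilon})$).

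The crux is the final balancing, and this is where the ninth power is the obstacle: the constant in $\zeta(2s)^{9}=O_{b}(1)$ grows like $(2b-1)^{-9}$ as $b\to\tfrac12$, so one cannot slide the contour to the critical line of $\zeta(2s-1)$ without supplying mean-value estimates for $\zeta(2s)^{9}$ near $\mathrm{Re}(2s)=1$. Equating $x^{b}T^{1-b}$ with $x/T$ — concretely $b=\tfrac{10}{19}$ and $T=x^{9/28}$ — renders both error contributions of size $x^{19/28}$, which yields $T_{2}(x)=c_{2}x+O(x^{19/28+\varepsilon})$. Replacing convexity by a fourth-power moment for $\zeta(2s-1)$, or crossing the pole at $s=\tfrac12$ and estimating $\zeta(2s)^{9}$ in mean square, should lower the exponent; the stated result already follows from the convexity estimate together with this choice of parameters.
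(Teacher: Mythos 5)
Your argument is correct and does prove the stated bound, but it is not the paper's proof; the differences are worth recording. On the algebraic side you make the Dirichlet series fully explicit: starting from $\sum_{m_1,m_2}s(m_1,m_2)(m_1m_2)^{-s}=\zeta(2s-1)\zeta(s)^4/\zeta(2s)$ and taking the even part of the local generating function, you get the exact factorization $F(s)=\zeta(2s-1)\zeta(2s)^9U(2s)$ with $U$ absolutely convergent in $\Re(w)>\tfrac12$ (your local data check out: $g(p)=4$, $g(p^2)=p+9$ match the closed form, and $(1+6v+v^2)(1-v)^6=1-20v^2+\cdots$). The paper's Proposition 3.1 only establishes the weaker structural statement $F(s;2)=\zeta(2s-1)H(s;2)$ with $H(s;2)$ absolutely convergent for $\sigma>1/2$, by $O$-term bookkeeping in the Euler product; your version additionally exhibits the ninth-order pole at $s=1/2$, information the paper never extracts. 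On the analytic side the two proofs use different tools on different lines: the paper shifts to $\Re s=11/20$ and bounds the vertical integral via the functional equation (the $\chi$-bound of its Lemma 2.6) combined with a first-moment estimate for $\zeta$ on the line $9/10$ deduced from the mean square (Lemma 2.5) by Cauchy--Schwarz and partial summation, obtaining $\ll x^{11/20}T^{2/5}$; you shift to $\Re s=10/19$ and use only the convexity bound on $\zeta(2s-1)$, obtaining $\ll x^{10/19}T^{9/19+\varepsilon}$. Both balance against the Perron/horizontal error $x^{1+\varepsilon}/T$ at the same height $T=x^{9/28}$ and land on the same exponent $19/28$. Your treatment of the truncation error (standard truncated Perron plus the remark that the near-diagonal square terms contribute $O(x^{1/2+\varepsilon})$ because $g(\ell^2)\ll\ell^{1+\varepsilon}$) is lighter than, and at least as sharp as, the paper's Proposition 4.1, which is proved from scratch using its Lemma 2.8.

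One side remark in your write-up is wrong, although it does not affect the validity of the proof of the stated theorem. You assert that the growth $(2b-1)^{-9}$ of the constant in $\zeta(2s)^9=O_b(1)$ forbids taking $b$ near $1/2$, and that this forces $b=10/19$. It does not: for any \emph{fixed} $b=\tfrac12+\varepsilon'$ that constant is finite, and for $|t|\geq 2$ one has $\zeta(2b+2it)\ll\log|t|$ uniformly in $b>1/2$, so the blow-up only affects the bounded-$t$ range, which contributes $O_{\varepsilon'}(x^{b})$. Hence your own estimates with $b=\tfrac12+\varepsilon'$ give errors $x^{b}T^{1-b+\varepsilon}+x^{1+\varepsilon}/T$, which balance at $T=x^{(1-b)/(2-b)}$ to $O(x^{2/3+\varepsilon})$ --- strictly better than $19/28$. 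So $b=10/19$ is not forced; it is a choice reverse-engineered to reproduce the exponent in the theorem (and, for what it is worth, the paper's own line $11/20$ is similarly suboptimal for its method). Since the theorem claims only $O(x^{19/28+\varepsilon})$, your proof as written remains a valid proof of the statement.
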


\begin{thm} \label{Th_2} For $k=3$, we have the asymptotic formula
\begin{equation}
T_{3}(x)=x^{2/3}\sum_{j=0}^{4}a_{j}\log^{j}x+O(x^{5/8+\varepsilon}),
\end{equation}
where $a_{j} \ (j=0,1,\ldots,4)$ are absolute constants.
\end{thm}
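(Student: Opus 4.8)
The plan is to treat $T_3(x)$ by the Perron/contour (``complex integration'') method, resting on an explicit factorization of the associated Dirichlet series into Riemann zeta factors.

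First I would encode the cube constraint analytically. Since $m_1m_2$ is a perfect cube exactly when $v_p(m_1)+v_p(m_2)\equiv 0\pmod 3$ for every prime $p$, the generating series
\[
F(s):=\sum_{\substack{m_1,m_2\ge 1\\ m_1m_2\in N_3}}\frac{s(m_1,m_2)}{(m_1m_2)^s}
\]
factors as an Euler product. Using \eqref{s(m,n)}, the local bivariate generating function is $\sum_{a,b\ge 0}s(p^a,p^b)u^av^b=\frac{1-uv}{(1-u)^2(1-v)^2(1-puv)}$; setting $u=v=p^{-s}$ and applying the cube-root-of-unity filter $\mathbf{1}[3\mid n]=\tfrac13\sum_{j=0}^{2}\omega^{jn}$ with $\omega=e^{2\pi i/3}$, together with the identity $\prod_{j=0}^{2}(1-p\omega^{j}t^{2})=1-p^{3}t^{6}$, each local factor collapses to a rational function of $p^{-3s}$. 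Reading off leading $p$-powers prime-by-prime then yields a factorization of the shape
\[
F(s)=\zeta(3s-1)^{4}\,\zeta(6s-3)\,B(s),
\]
where $B(s)$ is given by an Euler product converging absolutely (hence holomorphic and bounded on vertical lines) for $\Re s>1/2$. The two displayed zeta factors contribute poles at $s=2/3$ of orders $4$ and $1$, so $F$ has a pole of order $5$ there; the next singularity, produced by a hidden factor $\zeta(6s-2)^{3}$ absorbed into $B$, sits at $s=1/2$ and is what limits how far left the contour may be moved.

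With this in hand I would invoke a truncated Perron formula, $T_3(x)=\frac{1}{2\pi i}\int_{c-iT}^{c+iT}F(s)\frac{x^{s}}{s}\,ds+(\text{truncation error})$ with $c=2/3+\varepsilon$, and shift the contour to $\Re s=5/8$. The only pole crossed is the order-$5$ pole at $s=2/3$; its residue, computed from the Laurent expansions of $\zeta(3s-1)^{4}$ and $\zeta(6s-3)$ and the Taylor data of $B$ at $s=2/3$, is exactly $x^{2/3}\sum_{j=0}^{4}a_{j}\log^{j}x$, the order-$5$ pole accounting for the degree-$4$ polynomial. For the error, on the line $\Re s=5/8$ the bounded factor $B$ is harmless, and since there $3s-1$ has real part $7/8$ and $6s-3$ real part $3/4$, I would bound $\int|\zeta(3s-1)|^{4}|\zeta(6s-3)|\,\tfrac{dt}{|s|}$ by Cauchy--Schwarz against the eighth moment of $\zeta$ on $\Re=7/8$ (available since $7/8>3/4$) and the second moment on $\Re=3/4$, both of polylogarithmic size after the $1/|s|$ weighting; this gives a vertical contribution $\ll x^{5/8+\varepsilon}$. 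Taking $T=x^{1/24}$ makes the Perron truncation error $\ll x^{2/3}/T=x^{5/8}$, while the horizontal links are controlled by convexity bounds and are smaller, so the total is $O(x^{5/8+\varepsilon})$, the exponent arising as $5/8=2/3-1/24$.

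The main obstacle is the first step: pinning down the exact factorization. The filter and the collapse to a function of $p^{-3s}$ are routine, but reading off the correct multiplicities requires tracking the leading $p$-powers of several coefficients --- in particular confirming the order-$4$ contribution at $s=2/3$ coming from $\zeta(3s-1)$ and locating the obstructing $\zeta(6s-2)^{3}$ at $s=1/2$ --- since a miscount would alter both the degree of the log-polynomial and the admissible contour. Once the analytic structure is secured the error analysis is standard, the only delicate point being to site the contour at $\Re s=5/8$, where the precise moment inputs for $\zeta$ are unconditionally available.
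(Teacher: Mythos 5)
Your analytic skeleton is essentially the paper's: the same factorization $F(s;3)=\zeta^{4}(3s-1)\zeta(6s-3)H(s;3)$ with the cofactor absolutely convergent in $\Re s>1/2$ (the paper's Proposition 3.2, proved there by directly estimating $\sum_{u+v=3\alpha}s(p^u,p^v)$ rather than by your generating-function-plus-filter route), then truncated Perron, a contour shift past the order-$5$ pole at $s=2/3$, and moment estimates on the vertical line. But there is a genuine gap at the Perron step. You take $c=2/3+\varepsilon$, $T=x^{1/24}$, and assert the truncation error is $\ll x^{2/3}/T$. No version of truncated Perron gives that bound. The error is
\begin{equation*}
\ll \sum_{n}|a_n|\left(\frac{x}{n}\right)^{c}\min\left(1,\frac{1}{T|\log(x/n)|}\right),
\qquad a_n:=\sum_{\substack{m_1m_2=n \\ n\in N_3}}s(m_1,m_2),
\end{equation*}
and while the part with $n$ outside $(x/2,2x)$ is indeed $\ll x^{2/3+\varepsilon}/T$, the near-diagonal part $\sum_{x/2<n<2x}|a_n|\min\bigl(1,x/(T|x-n|)\bigr)$ must be handled separately, and with your tiny $T$ it is genuinely of the critical size: if $n=r^{6}$ then $a_n\ge s(r^3,r^3)\ge\phi(r^3)\gg n^{1/2-\varepsilon}$, and the weight-one window $|n-x|\le x/T=x^{23/24}$ contains $\asymp x^{1/8}$ sixth powers, so these terms alone contribute $\gg x^{5/8-\varepsilon}$. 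Proving the matching upper bound $\ll x^{5/8+\varepsilon}$ is possible, but requires a real argument exploiting both the sparsity of cubes near $x$ and the distribution of their square parts; this bookkeeping is exactly what the paper's Proposition 4.1 (supported by Lemma 2.8 on $\sum_{u^2v\le x}u$) exists to do, at $b=1+1/\log x$ and $T=x^{3/8}$. You neither perform nor flag it, and the bound you do state is false as a general principle. The repair inside your own framework is cheap: your vertical-line estimate $x^{5/8}T^{\varepsilon}$ does not degrade as $T$ grows, so take $T=x$ (or any $T\ge x^{1/3}$); then the window $|n-x|\le x/T$ contains $O(1)$ cubes, the pointwise bound $a_n\ll n^{1/2+\varepsilon}$ (from $\gcd(m_1,m_2)^2\mid n$) makes the near-diagonal part $\ll x^{1/2+\varepsilon}$, and the rest of your argument closes.

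Two further remarks. First, where your argument is complete it is in one respect cleaner than the paper's: at $\Re s=5/8$ the arguments $3s-1$ and $6s-3$ have real parts $7/8$ and $3/4$, both right of the critical line, so Cauchy--Schwarz with the eighth moment at $7/8$ (available since $7/8>3/4$) and the second moment at $3/4$ suffices; the paper stands at $\Re s=11/20$ and must additionally invoke the functional equation via the $\chi$-factor to convert $\zeta(3/10+6it)$, plus the deeper eighth-moment bound at $13/20$. Second, your identification of the hidden factor is off: expanding the local factors shows $H(s;3)$ behaves like $\zeta(6s-2)^{-1}$ (its Euler factor is $1+16p^{-3s}-p^{2-6s}+\cdots$), not $\zeta(6s-2)^{3}$; this slip is harmless, since either way the barrier for the cofactor sits at $\Re s=1/2$.
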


{\bf Notation.} Throughout the paper, $\mathbb{N}$ denotes the set of all positive integers, $N_{k}$ is the set of $k$-th power of natural numbers, $\mathbb{Z}_{m}$ is the additive group of residue classes modulo $m$, $\tau(n)$ is the number of divisors of $n$, $\phi$ is Euler's totient function, $\zeta$ is the Riemann zeta-function, $\varepsilon$ denotes an arbitrary small positive number.

\section{\bf Preliminary lemmas}

We shall use the following lemmas.
\begin{lem} \label{Lemma_1} Suppose $a>0$, $b>0$, $T\geq 2$. Then we have
\begin{align*}
\frac{1}{2\pi i}\int_{b-iT}^{b+iT}\frac{a^{s}}{s}ds=\left\{\begin{array}{ll}
1+O(a^{b}\min(1,\frac{1}{T\log a})),&\mbox{ if $a>1,$}\\
O(a^{b}\min(1,\frac{1}{T|\log a|})),& \mbox{ if $0<a<1,$}\\
\frac{1}{2}+O(\frac{b}{T}),&\mbox{ if $a=1,$}
\end{array}\right.
\end{align*}
where the $O$-constants are absolute.
\end{lem}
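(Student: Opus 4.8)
The plan is to prove this quantitative (truncated) form of the discontinuous Perron integral by combining an explicit evaluation on the line $a=1$, a contour-shift argument for $a\neq 1$, and a direct estimate to recover the uniform alternative inside the minimum. Throughout I parametrize the segment by $s=b+it$, so that
\begin{equation*}
\frac{1}{2\pi i}\int_{b-iT}^{b+iT}\frac{a^{s}}{s}\,ds=\frac{a^{b}}{2\pi}\int_{-T}^{T}\frac{e^{it\log a}}{b+it}\,dt=:I(a,T).
\end{equation*}

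I start with $a=1$. Then $e^{it\log a}=1$ and, writing $\frac{1}{b+it}=\frac{b-it}{b^{2}+t^{2}}$, the odd imaginary part cancels over the symmetric interval, leaving
\begin{equation*}
I(1,T)=\frac{1}{\pi}\int_{0}^{T}\frac{b}{b^{2}+t^{2}}\,dt=\frac{1}{\pi}\arctan\frac{T}{b}=\frac{1}{2}-\frac{1}{\pi}\arctan\frac{b}{T}.
\end{equation*}
Since $\arctan(b/T)\le b/T$, this is $\tfrac12+O(b/T)$, which is the third case.

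Next I treat $a\neq 1$ in the regime $T|\log a|\ge 1$, where the $1/(T|\log a|)$ term of the minimum is the smaller one. Here I replace the vertical segment by a horizontal shift of the contour. For $a>1$ we have $|a^{s}|=a^{\operatorname{Re}(s)}\to 0$ as $\operatorname{Re}(s)\to-\infty$, so I integrate $a^{s}/s$ around the rectangle with vertices $b\pm iT$ and $-U\pm iT$ and let $U\to\infty$; the only pole is the simple pole at $s=0$ with residue $a^{0}=1$, so the closed contour contributes the main term $1$. The left edge is $O(a^{-U}/U)\to 0$, while each horizontal edge is bounded by $\frac{1}{2\pi T}\int_{-\infty}^{b}a^{\sigma}\,d\sigma=\frac{a^{b}}{2\pi T\log a}$, giving $I(a,T)=1+O\!\big(a^{b}/(T\log a)\big)$. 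For $0<a<1$ the same argument with the contour pushed to $\operatorname{Re}(s)\to+\infty$ encloses no pole and yields $I(a,T)=O\!\big(a^{b}/(T|\log a|)\big)$; this settles the first two cases whenever $T|\log a|\ge 1$.

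Finally I handle the complementary regime $T|\log a|<1$, where the alternative $a^{b}$ inside the minimum is needed and the contour-shift bound above degenerates as $a\to 1$. Here $|t\log a|<1$ throughout the segment, so I expand $e^{it\log a}=1+O(|t\log a|)$ and reduce to the $a=1$ computation: the main part is $\frac{a^{b}}{\pi}\arctan(T/b)=O(a^{b})$, and the error is
\begin{equation*}
O\!\left(a^{b}|\log a|\int_{-T}^{T}\frac{|t|}{\sqrt{b^{2}+t^{2}}}\,dt\right)=O\big(a^{b}\,T|\log a|\big)=O(a^{b}),
\end{equation*}
since $\sqrt{b^{2}+t^{2}}\ge|t|$ and $T|\log a|<1$. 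Thus $I(a,T)=O(a^{b})$, which already gives the result for $0<a<1$; for $a>1$ one combines this with $a^{b}>1$ to see that the claimed main term $1$ differs from $I(a,T)$ by $O(a^{b})$, completing the minimum. The main obstacle is precisely this last step: the clean contour-shift estimate blows up as $a\to 1$, so one cannot obtain the uniform $a^{b}$ bound by moving contours and must argue directly, taking care that the direct estimate stays consistent with the main terms $1$ and $0$ dictated by the residue calculus.
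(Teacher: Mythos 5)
Your proof is correct, but note that the paper itself offers no argument here: it simply cites Lemma 6.5.1 of Pan and Pan, so any complete self-contained proof is ``a different route'' by default. Your argument is the standard one in spirit --- residue at $s=0$ plus a contour pushed to $\operatorname{Re}(s)\to\mp\infty$, with horizontal edges bounded by $\frac{1}{2\pi T}\int a^{\sigma}\,d\sigma$ --- and all three cases check out: the $a=1$ case is an exact $\arctan$ computation; the regime $T|\log a|\ge 1$ follows from the rectangle argument; and the degenerate regime $T|\log a|<1$ is handled by expanding $e^{it\log a}=1+O(|t\log a|)$ and reducing to the $a=1$ integral, with the observation $a^{b}>1$ (for $a>1$) reconciling the bound $I=O(a^{b})$ with the stated main term $1$. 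This last step is where your treatment genuinely diverges from the classical textbook proofs (Pan--Pan, Davenport): they typically obtain the uniform $O(a^{b})$ alternative by replacing the vertical segment with the arc of the circle $|s|=\sqrt{b^{2}+T^{2}}$ through $b\pm iT$ on the appropriate side of the pole, on which $|a^{s}|\le a^{b}$ and $|s|$ is constant, so the arc contributes $O(a^{b})$ directly. Your Taylor-expansion argument buys the same bound with no extra contour geometry, at the cost of splitting into the two regimes $T|\log a|\gtrless 1$; the circular-arc argument treats all $a\neq 1$ at once but requires checking the sign of $\operatorname{Re}(s)-b$ on the arc. Both yield absolute implied constants, and your case analysis is exhaustive, so the proof stands.
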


\begin{proof} This is Lemma 6.5.1 of Pan and Pan \cite{PP}.
\end{proof}

\begin{lem} \label{Lemma_2} Suppose $\ell\geq 0$ is a fixed integer. For $\sigma>1$ we have the following estimate
\begin{align*}
\zeta^{(\ell)}(\sigma+it) &\ll \min\left(\frac {1}{(\sigma-1)^{1+\ell}}, \log^{1+\ell} (|t|+2)\right).
\end{align*}
\end{lem}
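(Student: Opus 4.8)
The plan is to prove the two upper bounds separately and then observe that, since $|\zeta^{(\ell)}(\sigma+it)|$ is dominated by each of them, it is dominated by their minimum. The starting point is the absolutely convergent Dirichlet series, differentiated term by term: for $\sigma>1$,
\[
\zeta^{(\ell)}(\sigma+it)=(-1)^{\ell}\sum_{n=1}^{\infty}\frac{(\log n)^{\ell}}{n^{\sigma+it}},
\]
so that by the triangle inequality $|\zeta^{(\ell)}(\sigma+it)|\le\sum_{n=1}^{\infty}(\log n)^{\ell}n^{-\sigma}=:S(\sigma)$, a quantity independent of $t$.

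First I would bound $S(\sigma)\ll(\sigma-1)^{-(\ell+1)}$. The summand $(\log u)^{\ell}u^{-\sigma}$ increases on $[1,e^{\ell/\sigma}]$ and decreases thereafter, and $e^{\ell/\sigma}\le e^{\ell}$ since $\sigma>1$; hence the finitely many terms in the increasing range contribute $O_{\ell}(1)$, while the decreasing tail is majorized by $\int_{1}^{\infty}(\log u)^{\ell}u^{-\sigma}\,du$. The substitution $u=e^{v}$ turns this into the Gamma integral $\int_{0}^{\infty}v^{\ell}e^{-(\sigma-1)v}\,dv=\ell!\,(\sigma-1)^{-(\ell+1)}$, which yields the first estimate for all $\sigma>1$.

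The second bound $\ll\log^{\ell+1}(|t|+2)$ is the substantive part, since $S(\sigma)$ does not see $t$ and one must exploit the effective length of the series. I would invoke the Euler--Maclaurin representation, valid for $\sigma>0$ and any integer $N\ge1$,
\[
\zeta(s)=\sum_{n\le N}\frac{1}{n^{s}}+\frac{N^{1-s}}{s-1}-\frac{1}{2}N^{-s}-s\int_{N}^{\infty}\frac{\{u\}-\tfrac12}{u^{s+1}}\,du,
\]
differentiate it $\ell$ times in $s$, and choose the truncation length $N\asymp|t|+2$ (we may assume $1<\sigma\le2$, the first bound already giving $O(1)$ when $\sigma\ge2$). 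The differentiated main sum is $(-1)^{\ell}\sum_{n\le N}(\log n)^{\ell}n^{-s}$, whose modulus is at most $\sum_{n\le N}(\log n)^{\ell}n^{-\sigma}\le\sum_{n\le N}(\log n)^{\ell}n^{-1}\ll\log^{\ell+1}N\ll\log^{\ell+1}(|t|+2)$, where $\sigma>1$ lets me replace $n^{-\sigma}$ by $n^{-1}$; this is the term that produces the claimed bound.

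It then remains to check that the remaining differentiated terms are absorbed. Differentiating $N^{1-s}/(s-1)$ by the Leibniz rule produces terms of the shape $(\log N)^{j}N^{1-s}(s-1)^{-(\ell-j+1)}$, each of modulus $\ll(\log N)^{\ell}N^{1-\sigma}|s-1|^{-1}\ll(\log N)^{\ell}/|t|\ll1$ once $N^{1-\sigma}\le1$ and $|s-1|\ge|t|$ are used; the term $N^{-s}$ is $O(1)$, and the $\ell$-fold derivative of the integral is $\ll|s|(\log N)^{\ell}N^{-\sigma}\asymp|t|^{1-\sigma}\log^{\ell}(|t|+2)\ll\log^{\ell}(|t|+2)$. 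Collecting these gives the second estimate for $|t|$ not too small, i.e.\ where $s$ stays away from the pole at $s=1$ (near which the first estimate is the operative one), and combining it with the first yields the stated minimum. \textbf{The main obstacle} I anticipate is precisely this bookkeeping: controlling the $\ell$-fold differentiated Euler--Maclaurin remainder uniformly in $\sigma$ and $t$, verifying that every auxiliary term is dominated by the leading $\log^{\ell+1}(|t|+2)$, and making the balancing choice $N\asymp|t|+2$ that matches the length of the main sum against the size of the tail.
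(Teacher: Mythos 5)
Your proposal is correct in substance, but it takes a genuinely different route from the paper. The paper's proof is two sentences: it cites Pan--Pan (Chapter 7) for the case $\ell=0$ and then deduces the case $\ell\geq 1$ from $\ell=0$ by Cauchy's integral formula, taking a circle of radius $\asymp\sigma-1$ for the first bound and of radius $\asymp 1/\log(|t|+2)$ for the second (the latter silently requires the $\ell=0$ log-bound to persist slightly to the left of the line $\sigma=1$, since the circle dips below it). You instead prove each $\ell$ directly: the pole-type bound from the differentiated Dirichlet series compared with the Gamma integral $\int_{0}^{\infty}v^{\ell}e^{-(\sigma-1)v}\,dv=\ell!\,(\sigma-1)^{-(\ell+1)}$, and the log-type bound by $\ell$-fold Leibniz differentiation of the Euler--Maclaurin formula truncated at $N\asymp|t|+2$. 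Your bookkeeping of the differentiated terms (main sum, $N^{1-s}/(s-1)$, $N^{-s}$, and the tail integral) is sound for $\sigma>1$, $|t|\geq 1$. What your route buys is self-containedness and explicit ranges of validity; what the paper's route buys is brevity and the automatic propagation of any $\ell=0$ estimate to all derivatives.

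One caveat, which you half-noticed and which is really a defect of the lemma as stated rather than of your argument: the bound with the minimum cannot hold near $s=1$. Take $t=0$ and $\sigma\to 1^{+}$: the right-hand side is $\min\left((\sigma-1)^{-(1+\ell)},\log^{1+\ell}2\right)=O(1)$, while $|\zeta^{(\ell)}(\sigma)|\sim\ell!\,(\sigma-1)^{-(1+\ell)}\to\infty$; similarly, for $\ell=0$, $t=0$, $\sigma\to\infty$ the left side tends to $1$ while the right side tends to $0$. So your closing claim that the two estimates ``combine to yield the stated minimum'' is not literally achievable by any proof: what your argument actually establishes is the first bound for $1<\sigma\leq 2$ (your $O_{\ell}(1)$ contribution from the increasing range of the summand is absorbed only when $\sigma-1\ll 1$) and the second bound for $\sigma>1$, $|t|\geq 1$, and the minimum is legitimate exactly where these ranges overlap. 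This is harmless for the paper, whose contour integrals invoke the lemma only with $\sigma\leq b=1+1/\log x$ and $|t|=T$ large, and the paper's own citation-plus-Cauchy proof suffers from the identical defect; but a careful write-up should either restrict the statement (say to $1<\sigma\leq 2$, $|t|\geq 1$) or assert the two inequalities separately, each in its own range, as your proof in effect does.
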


\begin{proof} The estimate for the case $\ell=0$ can be found in Chapter 7 of Pan and Pan \cite{PP}.
The estimate for the case $\ell\geq 1$ follows from the result of the case $\ell=0$ and Cauchy's
theorem.
\end{proof}

\begin{lem} \label{Lemma_3} Suppose $\ell\geq 0$ is a fixed integer. Then for $0\leq \sigma \leq 1$ we have
\begin{align*}
\zeta^{(\ell)}(\sigma+it)\ll (|t|+2)^{\frac{1-\sigma}{2}}\log^{1+\ell} (|t|+2).
\end{align*}
\end{lem}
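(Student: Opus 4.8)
The plan is to handle the base case $\ell=0$ and the derivative cases $\ell\ge 1$ separately, exactly in the spirit of the proof of Lemma~\ref{Lemma_2}. For $\ell=0$ the assertion is the classical convexity (Phragmén--Lindelöf) bound $\zeta(\sigma+it)\ll (|t|+2)^{(1-\sigma)/2}\log(|t|+2)$, which I would obtain from the functional equation $\zeta(s)=\chi(s)\zeta(1-s)$ with $\chi(s)=2^{s}\pi^{s-1}\sin(\pi s/2)\Gamma(1-s)$. On the right edge $\sigma=1$, the case $\ell=0$ of Lemma~\ref{Lemma_2} gives $\zeta(1+it)\ll\log(|t|+2)$; on the left edge $\sigma=0$, Stirling's formula yields $|\chi(it)|\asymp (|t|/2\pi)^{1/2}$, so $\zeta(it)=\chi(it)\zeta(1-it)\ll (|t|+2)^{1/2}\log(|t|+2)$. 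Since $(s-1)\zeta(s)$ is entire and of finite order in the strip $0\le\sigma\le 1$, the Phragmén--Lindelöf principle interpolates between the two boundary estimates: the interpolating exponent is the linear function of $\sigma$ equal to $1/2$ at $\sigma=0$ and to $0$ at $\sigma=1$, namely $(1-\sigma)/2$, the logarithmic factors being absorbed harmlessly. This is the standard convexity estimate and may be quoted from Titchmarsh or from Pan and Pan \cite{PP}.

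For $\ell\ge 1$ I would deduce the bound from the case $\ell=0$ by Cauchy's integral formula. Writing $s=\sigma+it$ and choosing the radius $r=1/\log(|t|+2)$, one has $\zeta^{(\ell)}(s)=\frac{\ell!}{2\pi i}\oint_{|z-s|=r}\frac{\zeta(z)}{(z-s)^{\ell+1}}\,dz$. On this circle $\operatorname{Re}(z)\in[\sigma-r,\sigma+r]$, so the $\ell=0$ bound gives $\zeta(z)\ll (|t|+2)^{(1-(\sigma-r))/2}\log(|t|+2)$; the point of the choice of $r$ is that $(|t|+2)^{r/2}=e^{1/2}=O(1)$, so the horizontal shift of size $r$ costs only a bounded factor and $\zeta(z)\ll (|t|+2)^{(1-\sigma)/2}\log(|t|+2)$ holds uniformly on the circle. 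Estimating the integral by length over the minimal modulus of $(z-s)^{\ell+1}$ produces a factor $r^{-\ell}=\log^{\ell}(|t|+2)$, whence $\zeta^{(\ell)}(s)\ll (|t|+2)^{(1-\sigma)/2}\log^{1+\ell}(|t|+2)$, as claimed.

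The genuinely delicate point is the base case: combining the functional equation with Phragmén--Lindelöf while correctly accounting for the pole of $\zeta$ at $s=1$ (handled by passing to $(s-1)\zeta(s)$) and for the logarithmic factors on the two boundary lines. Once the convexity bound for $\ell=0$ is in hand, the passage to derivatives is routine; the only subtlety there is calibrating the radius $r\asymp 1/\log(|t|+2)$ so that each differentiation costs exactly one power of $\log(|t|+2)$ and no power of $(|t|+2)$.
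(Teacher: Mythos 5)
Your proposal is correct and is essentially the paper's proof: the paper handles $\ell=0$ by citing the convexity bound (Ivi\'c, (1.66)) --- precisely the estimate you derive in the standard way from the functional equation plus the Phragm\'en--Lindel\"of principle (the same interpolation device the paper itself uses to prove Lemma \ref{Lemma_4}) --- and, like you, it deduces $\ell\ge 1$ from $\ell=0$ via Cauchy's formula, where your choice of radius $r=1/\log(|t|+2)$ correctly explains why each differentiation costs exactly one factor of $\log(|t|+2)$ and no power of $|t|+2$. The one caveat, which your write-up shares with the paper's own statement, is that the asserted bound cannot hold in a fixed neighbourhood of the pole $s=1$ (there the left-hand side blows up while the right-hand side stays bounded), so both the cited estimate and your Phragm\'en--Lindel\"of argument for $(s-1)\zeta(s)$ implicitly require something like $|t|\ge 1$ when $\sigma$ is close to $1$.
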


\begin{proof} The estimate for $\ell=0$ can be found in Ivi\'c \cite{Iv}, see (1.66).
The estimate for the case $\ell\geq 1$ follows from the result of the case $\ell=0$ and Cauchy's theorem.
\end{proof}

\begin{lem} \label{Lemma_4} Suppose $\ell\geq 0$ is a fixed integer. Then for $1/2\leq \sigma \leq 1$ we have
\begin{align*}
\zeta^{(\ell)}(\sigma+it)\ll (|t|+2)^{\frac{1-\sigma}{3}}\log^{1+\ell} (|t|+2).
\end{align*}
\end{lem}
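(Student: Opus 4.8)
The plan is to reduce the statement to the case $\ell=0$ and to obtain the latter by interpolation, i.e. by the Phragm\'en--Lindel\"of convexity principle, between the two vertical lines $\sigma=1$ and $\sigma=1/2$ bounding the strip. The first thing to notice is that the claimed exponent is exactly the linear interpolant of the two boundary exponents: writing $\mu(\sigma)$ for the exponent of $(|t|+2)$, I want $\mu(1)=0$ and $\mu(1/2)=1/6$, and the affine function through these points is precisely $\mu(\sigma)=\tfrac{1-\sigma}{3}$ for $1/2\le\sigma\le 1$. Since $\mu$ is known to be a convex function of $\sigma$ (this is itself a consequence of Phragm\'en--Lindel\"of), it suffices to establish the two endpoint bounds.

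First I would record these boundary bounds. On the line $\sigma=1$ one has the standard estimate $\zeta(1+it)\ll\log(|t|+2)$, which follows from Lemma~\ref{Lemma_2} on letting $\sigma\to 1^{+}$; this supplies the exponent $0$ together with one logarithm. On the critical line $\sigma=1/2$ I would invoke the classical van der Corput subconvexity estimate
\[
\zeta(\tfrac12+it)\ll (|t|+2)^{1/6}\log(|t|+2),
\]
a standard consequence of the theory of exponential sums that can be quoted from Ivi\'c~\cite{Iv} (or Titchmarsh); this supplies the exponent $1/6$, again with a single logarithm. Note that Lemma~\ref{Lemma_3} is of no help here, as it only gives the weaker exponent $1/4$ at $\sigma=1/2$.

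With these two inputs the heart of the argument is the convexity step. After removing the pole at $s=1$ --- for instance by working with $(s-1)\zeta(s)$, or simply by dealing with a bounded neighbourhood of $s=1$ separately --- the function is holomorphic and of finite order in the closed strip, so Phragm\'en--Lindel\"of applies and delivers $\zeta(\sigma+it)\ll (|t|+2)^{(1-\sigma)/3}\log(|t|+2)$ throughout $1/2\le\sigma\le 1$, the logarithmic factors agreeing on both edges. I expect the main routine obstacle to lie exactly in this step, namely the bookkeeping needed to carry the non-polynomial $\log$ factors cleanly through the principle (absorbing them into an arbitrarily small power of $(|t|+2)$, or using a logarithmically refined version of the convexity theorem); the genuinely deep ingredient, the critical-line bound, is cited rather than proved.

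Finally, the case $\ell\ge 1$ follows from the case $\ell=0$ by Cauchy's integral formula, exactly as in Lemmas~\ref{Lemma_2} and~\ref{Lemma_3}. Representing $\zeta^{(\ell)}(\sigma+it)$ as a contour integral of $\zeta$ over a circle of radius $r\asymp 1/\log(|t|+2)$ centred at $\sigma+it$ produces the extra factor $r^{-\ell}\asymp\log^{\ell}(|t|+2)$, raising the logarithmic power from $1$ to $1+\ell$. The only point needing a word of care is that at the left edge $\sigma=1/2$ part of the circle lies in $\sigma<1/2$; there the functional equation combined with the bound just established for $\sigma>1/2$ gives $\zeta\ll (|t|+2)^{1/6+o(1)}\log(|t|+2)$ on the circle, and since $r\asymp 1/\log(|t|+2)$ the excess in the exponent contributes only a factor $O(1)$, so the stated bound persists up to the boundary.
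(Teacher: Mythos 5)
Your proposal is correct and follows essentially the same route as the paper: both interpolate via the Phragm\'en--Lindel\"of principle between the bounds $\zeta(1/2+it)\ll(|t|+2)^{1/6}$ and $\zeta(1+it)\ll\log(|t|+2)$, and then pass to $\ell\geq 1$ by Cauchy's theorem. Your write-up merely adds explicit detail (handling the pole at $s=1$, the logarithmic factors, and the radius choice in the Cauchy step) that the paper leaves implicit.
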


\begin{proof} The estimate for $\ell=0$ follows from the bounds
\begin{align*}
\zeta(1/2+it) \ll (|t|+2)^{1/6}, \ \
\zeta(1+it)  \ll  \log (|t|+2)
\end{align*}
and the Phragm\'{e}n-Lindel\"{o}f principle.
The estimate for $\ell\geq 1$ follows from the result of $\ell=0$ and Cauchy's theorem.
\end{proof}

\begin{lem} \label{Lemma_5} Suppose $V>10$ is a large parameter. Then
we have
\begin{align}
&\int_{-V}^{ V}| \zeta (u+iv)|^2dv  \ll V \ \ (0.6<u<2), \label{2.1}\\
&\int_{-V}^{ V}| \zeta (13/20+iv)|^8dv  \ll V^{1+\varepsilon}. \label{2.2}
\end{align}
\end{lem}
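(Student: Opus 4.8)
The two estimates in Lemma~\ref{Lemma_5} are of rather different natures, so I would treat them separately. For the mean-square bound (\ref{2.1}) the essential observation is that the whole strip $0.6<u<2$ lies strictly to the right of the critical line, so that the classical second-moment theorem applies with room to spare. I would first record the standard mean-square bound
\[
\int_0^T|\zeta(\sigma+it)|^2\,dt \ll \zeta(2\sigma)\,T,
\]
valid for fixed $\sigma>1/2$ with an implied constant uniform for $\sigma$ in compact subintervals of $(1/2,\infty)$; it follows from approximating $\zeta(\sigma+it)$ by the Dirichlet polynomial $\sum_{n\le T}n^{-\sigma-it}$ and applying the mean value theorem for Dirichlet polynomials. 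Since $0.6\le u\le 2$ forces $2u\ge 1.2$, the factor $\zeta(2u)$ stays bounded by the absolute constant $\zeta(1.2)$, so $\int_0^T|\zeta(u+iv)|^2\,dv\ll T$ holds uniformly in $u$; splitting $[-V,V]$ into $[-V,0]$ and $[0,V]$ then gives (\ref{2.1}) with an absolute implied constant.

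The eighth-moment bound (\ref{2.2}) is the substantive part and is where the real difficulty lies. A naive factorisation $|\zeta|^8=|\zeta|^4\cdot|\zeta|^4$, bounding one factor pointwise by Lemma~\ref{Lemma_4} (which at $u=13/20$ gives $|\zeta(13/20+iv)|\ll(|v|+2)^{7/60+\varepsilon}$) and the other by the fourth moment, only yields $V^{22/15+\varepsilon}$, which is far too weak. Instead I would appeal to the genuine theory of higher power moments of $\zeta$ off the critical line. The relevant fact is the eighth-moment estimate $\int_0^T|\zeta(\sigma+it)|^{8}\,dt\ll T^{1+\varepsilon}$, valid for every $\sigma\ge 5/8$: this is the case $A=8$ of the classical power-moment bound $\int_0^T|\zeta(\sigma+it)|^{A}\,dt\ll T^{1+\varepsilon}$ with $A=\tfrac{4}{3-4\sigma}$ on $\tfrac12\le\sigma\le\tfrac58$ (so that $A=8$ at $\sigma=\tfrac58$), together with the monotonicity of the moments in $\sigma$. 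Since $13/20>5/8$, the eighth moment at $u=13/20$ is $\ll V^{1+\varepsilon}$.

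If one prefers to prove rather than quote this input, I would convert the moment into a large-values count, writing
\[
\int_{-V}^{V}|\zeta(13/20+iv)|^8\,dv = 8\int_0^\infty W^{7}\,\mu\bigl\{\,|v|\le V:\ |\zeta(13/20+iv)|>W\,\bigr\}\,dW,
\]
and bounding the distribution function by a large-value inequality of Hal\'asz--Montgomery type, fed with the mean-square estimate (\ref{2.1}) and a Dirichlet-polynomial (approximate functional equation) representation of $\zeta$. Balancing the size of the large values of $\zeta$ against their sparsity is exactly the step that pins down the threshold $\sigma=5/8$, and it is the technical heart of the matter. The main obstacle is therefore precisely this large-values / power-moment input for (\ref{2.2}); once it is available, the elementary reductions (splitting the range of $v$ and using $13/20>5/8$) finish the proof, while (\ref{2.1}) is entirely classical.
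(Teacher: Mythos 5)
Your proposal is correct, and in substance it coincides with the paper's treatment: the paper's ``proof'' of this lemma consists of two citations to Ivi\'c \cite{Iv}, quoting (\ref{2.1}) from (8.112) there (stated for $u>1/2+\varepsilon$) and (\ref{2.2}) from Chapter~8. What you write out is exactly the content behind those citations. For (\ref{2.1}) you give the standard Dirichlet-polynomial mean-value argument instead of quoting it; for (\ref{2.2}) you invoke the power-moment theorem $\int_1^T|\zeta(\sigma+it)|^{4/(3-4\sigma)}\,dt\ll T^{1+\varepsilon}$ on $\tfrac12<\sigma\le\tfrac58$ (Ivi\'c, Theorem~8.4), whose endpoint $\sigma=\tfrac58$ gives exponent $8$, combined with $13/20>5/8$; this is precisely the Chapter~8 result the paper has in mind, and your rejection of the naive pointwise-times-fourth-moment factorization (which only gives $V^{22/15+\varepsilon}$) is the right call. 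Two small remarks. First, the passage from $\sigma=5/8$ to $\sigma=13/20$ via ``monotonicity of moments in $\sigma$'' deserves a word of justification (Gabriel-type convexity of mean values, or simply the observation that Ivi\'c's theorem supplies exponents $\ge 8$ for every $\sigma\ge 5/8$, after which H\"older on $[-V,V]$ yields the eighth moment). Second, your claim that (\ref{2.1}) holds \emph{uniformly} in $u$ over $0.6<u<2$ is too strong: at $u=1$ the pole of $\zeta$ at $s=1$ makes $\int_{-V}^{V}|\zeta(1+iv)|^{2}\,dv$ divergent, so the implied constant must depend on $u$ (or the point $u=1$ be excluded). This flaw is shared by the lemma as stated in the paper and is harmless, since the estimate is only ever applied at the fixed abscissas $u=9/10$ and $u=7/10$.
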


\begin{proof}
 The estimate (\ref{2.1}) holds for $u>1/2+\varepsilon$, see for example, (8.112) of Ivi\'c \cite{Iv}. The second estimate can be found in Chapter 8 of Ivi\'c \cite{Iv}.
\end{proof}

\begin{lem} \label{Lemma_6} If $\zeta(s)=\chi(s)\zeta(1-s)$, then the estimate
$$\chi(s)\ll (|t|+2)^{1/2-\sigma}$$
holds uniformly for $0\leq \sigma \leq 1$.
\end{lem}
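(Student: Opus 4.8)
The plan is to reduce everything to Stirling's formula for the gamma function. Starting from the symmetric form of the functional equation, $\pi^{-s/2}\Gamma(s/2)\zeta(s)=\pi^{-(1-s)/2}\Gamma((1-s)/2)\zeta(1-s)$, and comparing with $\zeta(s)=\chi(s)\zeta(1-s)$, one reads off the closed form
\[
\chi(s)=\pi^{\,s-\frac12}\,\frac{\Gamma\!\left(\frac{1-s}{2}\right)}{\Gamma\!\left(\frac{s}{2}\right)}.
\]
Writing $s=\sigma+it$ and taking moduli, the factor $|\pi^{s-1/2}|=\pi^{\sigma-1/2}$ lies between two positive constants for $0\le\sigma\le1$, so it suffices to bound the gamma quotient $\bigl|\Gamma((1-s)/2)\bigr|\big/\bigl|\Gamma(s/2)\bigr|$.

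The heart of the matter is the large-$|t|$ behaviour. I would invoke the complex Stirling estimate
\[
|\Gamma(a+ib)|=\sqrt{2\pi}\,|b|^{\,a-\frac12}e^{-\frac{\pi}{2}|b|}\bigl(1+O(1/|b|)\bigr),
\]
valid uniformly for $a$ in any fixed bounded interval as $|b|\to\infty$. Applying it with $a+ib=\tfrac{1-\sigma}{2}-\tfrac{it}{2}$ in the numerator and $a+ib=\tfrac{\sigma}{2}+\tfrac{it}{2}$ in the denominator — in both cases $|b|=|t|/2$ — the exponential factors $e^{-\pi|t|/4}$ cancel exactly, as do the $\sqrt{2\pi}$'s. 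What survives is the algebraic factor
\[
\left(\frac{|t|}{2}\right)^{\frac{1-\sigma}{2}-\frac12-\left(\frac{\sigma}{2}-\frac12\right)}=\left(\frac{|t|}{2}\right)^{\frac12-\sigma}.
\]
Hence $|\chi(s)|\ll|t|^{1/2-\sigma}\asymp(|t|+2)^{1/2-\sigma}$ for $|t|\ge1$, uniformly in $\sigma\in[0,1]$.

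For the remaining range $|t|\le1$ one argues by compactness: $\chi$ is continuous, hence bounded, on the rectangle $\{0\le\sigma\le1,\ |t|\le1\}$ (the pole of $\Gamma((1-s)/2)$ at $s=1$ is the only exceptional point and is avoided by the contours in the applications), while $(|t|+2)^{1/2-\sigma}$ stays between two positive constants there, so the bound holds trivially. The only genuinely delicate point in the whole argument is the exact cancellation of the dominant exponential growth: each of $\Gamma((1-s)/2)$ and $\Gamma(s/2)$ decays like $e^{-\pi|t|/4}$, and it is only in the ratio that a polynomial bound emerges. Making this rigorous requires Stirling's formula to hold uniformly in $\sigma$, which is the one step deserving care; everything else is bookkeeping. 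Alternatively, the estimate is entirely standard and may simply be quoted from Titchmarsh or Ivi\'c.
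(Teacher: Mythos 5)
Your proof is correct, and it takes a genuinely different route from the paper: the paper gives no argument at all, just the citation ``See (1.25) in Ivi\'c'', which is the asymptotic formula $\chi(s)=(2\pi/t)^{\sigma+it-1/2}e^{i(t+\pi/4)}\left(1+O(1/t)\right)$, valid for $t\ge t_0>0$ uniformly in bounded $\sigma$, of which the lemma is an immediate corollary. What you have written out --- extracting $\chi(s)=\pi^{s-1/2}\Gamma\left(\tfrac{1-s}{2}\right)/\Gamma\left(\tfrac{s}{2}\right)$ from the symmetric functional equation and running uniform Stirling on the gamma quotient, with the exact cancellation of the $e^{-\pi|t|/4}$ decay --- is essentially the standard proof of that cited formula, so your version makes the lemma self-contained where the paper treats it as a black box; the price is that you must track uniformity in $\sigma$ and handle small $|t|$, which is exactly where your two side remarks come in, and both are sound. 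The exponent bookkeeping $\frac{1-\sigma}{2}-\frac12-\left(\frac{\sigma}{2}-\frac12\right)=\frac12-\sigma$ is right, and replacing $|t|/2$ by $|t|+2$ costs only a bounded factor for $|t|\ge1$ since the exponent lies in $[-1/2,1/2]$. Moreover, your caveat about the pole at $s=1$ is not pedantry but a real defect of the lemma as stated: $\chi$ genuinely has a pole there (since $\zeta$ does, while $\zeta(1-s)$ tends to $\zeta(0)=-1/2\ne0$), so no bound of the form $\chi(s)\ll(|t|+2)^{1/2-\sigma}$ can hold uniformly on the full strip $0\le\sigma\le1$ without a restriction on $t$; the estimate is true for, say, $|t|\ge1$, and that is the only regime in which the paper uses it (in the bounds for $H_2$ the argument of $\chi$ is $1/10+2it$ or $3/10+6it$ with $|t|\ge1$, far from $s=1$). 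So your proof, including the restriction it makes explicit, is the honest, self-contained version of the paper's citation.
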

\begin{proof} See (1.25) in Ivi\'{c} \cite{Iv}.
\end{proof}

\begin{lem} \label{Lemma_7} Let $\psi(t)=t-[t]-\frac{1}{2}$, $H\geq 3$. Then
$$
\psi(t)=-\sum_{1\leq |h|\leq H}\frac{e(ht)}{2\pi ih}+O\left(\min\left(1,\frac{1}{H\|t\|}\right)\right),
$$
where $\min\left(1,\frac{1}{H\|t\|}\right)=\sum\limits_{h=-\infty}^{\infty}a_{h}e(ht)$,
$a_{h}\ll \min\left(\frac{\log 2H}{H},\frac{1}{|h|},\frac{H}{h^{2}}\right)$.
\end{lem}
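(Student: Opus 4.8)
The plan is to separate the statement into two essentially independent parts: first, the approximation of $\psi$ by the truncated Fourier sum with error controlled by the majorant $g(t):=\min\!\left(1,\frac{1}{H\|t\|}\right)$; and second, the Fourier expansion of $g$ itself together with the stated bounds on its coefficients $a_h$. The second part does not involve $\psi$ at all and can be handled on its own.

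For the first part I would start from the classical (conditionally convergent) Fourier expansion of the sawtooth function,
\[
\psi(t)=-\sum_{h=1}^{\infty}\frac{\sin(2\pi h t)}{\pi h}=-\sum_{h\neq 0}\frac{e(ht)}{2\pi i h}\qquad(t\notin\mathbb{Z}),
\]
so that the error in the lemma is exactly the tail $\psi(t)+\sum_{1\le|h|\le H}\frac{e(ht)}{2\pi i h}=-\sum_{h>H}\frac{\sin(2\pi h t)}{\pi h}$. To bound this tail I would use partial summation together with the elementary estimate $\bigl|\sum_{h=1}^{N}\sin(2\pi h t)\bigr|\le \frac{1}{|\sin\pi t|}\le \frac{1}{2\|t\|}$, which follows from summing the geometric series and from $|\sin\pi t|=\sin(\pi\|t\|)\ge 2\|t\|$. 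Partial summation then yields $\sum_{h>H}\frac{\sin(2\pi h t)}{\pi h}\ll \frac{1}{H\|t\|}$. To obtain the competing bound $O(1)$, I would invoke (or reprove by splitting the sum at $h\approx 1/\|t\|$) the uniform boundedness of the partial sums $\sum_{h\le H}\frac{\sin(2\pi h t)}{\pi h}$; combined with $|\psi(t)|\le\frac12$ this shows the tail is $\ll 1$. Taking the minimum of the two bounds produces the claimed error term $O(g(t))$.

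For the second part I would record that $g$ is an even, continuous, $1$-periodic function and compute its Fourier coefficients $a_h=\int_0^1 g(t)e(-ht)\,dt$ by integrating by parts zero, one, or two times. The trivial (zeroth-order) bound $|a_h|\le\int_0^1 g(t)\,dt\ll\frac{\log 2H}{H}$ follows by evaluating $\int_0^1 g$ directly, splitting into the range $\|t\|\le 1/H$ (where $g=1$) and $\|t\|>1/H$ (where $g=\frac{1}{H\|t\|}$). Since $g$ is continuous and periodic, one integration by parts gives $a_h=\frac{1}{2\pi i h}\int_0^1 g'(t)e(-ht)\,dt$, and $\int_0^1|g'|\ll 1$ yields $|a_h|\ll\frac{1}{|h|}$. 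A second integration by parts picks up the jump of $g'$ at the breakpoints $\|t\|=1/H$ (where $g'$ jumps by $\asymp H$) plus $\int_0^1|g''|\ll H$, giving $|a_h|\ll\frac{H}{h^2}$. Combining the three estimates yields $a_h\ll\min\!\left(\frac{\log 2H}{H},\frac{1}{|h|},\frac{H}{h^2}\right)$.

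The main technical delicacy lies in the first part: because the Fourier series of $\psi$ converges only conditionally, the tail estimate must be carried out by partial summation rather than by absolute convergence, and the $O(1)$ bound near integer points $t$ (where $\|t\|$ is small and the $\frac{1}{H\|t\|}$ bound is useless) relies on the uniform boundedness of the sine-series partial sums. Everything else — the direct evaluation of $a_0$ and the integration-by-parts bounds — is routine once the piecewise structure of $g$ and the location and size of the jump of $g'$ have been recorded carefully. Since this is a standard result going back to Vinogradov, one could alternatively simply cite it, but the sketch above gives a self-contained derivation.
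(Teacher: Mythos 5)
Your proof is correct, but it takes a genuinely different route from the paper: the paper offers no proof at all, it simply cites Heath-Brown's paper on the Pjateckii-Sapiro prime number theorem, where this truncated expansion of $\psi$ (a classical device going back to Vinogradov, with sharper variants due to Vaaler) is stated and used. Your sketch supplies the standard self-contained derivation instead, and both halves of it are sound: for the truncation error you use the conditionally convergent sawtooth series, the geometric-series bound $\bigl|\sum_{h\le N}\sin(2\pi ht)\bigr|\le \frac{1}{2\|t\|}$ plus partial summation to get the $\frac{1}{H\|t\|}$ bound, and the uniform boundedness of the sine partial sums for the competing $O(1)$ bound; for the majorant $g(t)=\min\left(1,\frac{1}{H\|t\|}\right)$ you obtain the three coefficient bounds by zero, one, and two integrations by parts, using $\int_0^1 g\ll \frac{\log 2H}{H}$, $\int_0^1|g'|\ll 1$, and the jump of $g'$ of size $\asymp H$ at $\|t\|=1/H$ together with $\int_0^1|g''|\ll H$. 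One pedantic point worth recording: at integer $t$ the Fourier series of $\psi$ converges to $0$ rather than to $\psi(t)=-\frac{1}{2}$, so the identity you start from requires $t\notin\mathbb{Z}$; the lemma still holds there because the truncated sum vanishes and the allowed error is $O(1)$, so this is a trivial extra check rather than a gap. As for what each approach buys: the paper's citation is the economical choice for a lemma this standard, while your derivation makes the result self-contained and shows exactly where each of the three bounds on $a_h$ comes from --- the $\frac{\log 2H}{H}$ from the mean of $g$, the $\frac{1}{|h|}$ from the total variation of $g$, and the $\frac{H}{h^2}$ from the jumps of $g'$.
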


\begin{proof} The estimate can be found in Heath-Brown \cite{DR}.
\end{proof}

\begin{lem} \label{Lemma_8} Let $G(x):=\sum_{u^{2}v\leq x}u$, then we have the following estimate
$$
G(x)=\frac{1}{2}x\log x+\frac{3}{2}\gamma x-\frac{1}{2}x+O(x^{5/9}\log x),
$$
where $\gamma$ is the Euler-Mascheroni constant.
\end{lem}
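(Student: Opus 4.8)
The plan is to evaluate $G(x)=\sum_{u^{2}v\le x}u$ by the Dirichlet hyperbola method, splitting the summation according to whether $u\le z$ or $u>z$ for a parameter $z$ to be chosen, and then to reduce matters to two one-dimensional sums involving $\psi$. Writing $S(y):=\sum_{u\le y}u$ and $V:=x/z^{2}$, the hyperbola identity reads
\begin{equation*}
G(x)=\sum_{u\le z}u\Big\lfloor \frac{x}{u^{2}}\Big\rfloor+\sum_{v\le V}S\!\left(\sqrt{x/v}\,\right)-S(z)\Big\lfloor \frac{x}{z^{2}}\Big\rfloor ,
\end{equation*}
up to negligible boundary terms. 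I would take $z=x^{1/3}$, so that $V=x^{1/3}$ as well, which is the choice that balances the two resulting oscillatory sums.

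Next I would insert the exact formulas $\lfloor t\rfloor=t-\tfrac12-\psi(t)$ and, derived from it, $S(y)=\tfrac12(y-\psi(y))^{2}-\tfrac18=\tfrac{y^{2}}{2}-y\psi(y)+\tfrac12\psi(y)^{2}-\tfrac18$. Expanding the smooth parts by the elementary estimate $\sum_{n\le w}n^{-1}=\log w+\gamma-\psi(w)/w+O(w^{-2})$ and by $S(w)=\tfrac{w^{2}}{2}-w\psi(w)+O(1)$, the leading terms combine to $\tfrac12 x\log x+\tfrac32\gamma x-\tfrac12 x$, the term $-\tfrac12 x$ arising precisely from the subtracted product $-S(z)\lfloor x/z^{2}\rfloor$ via $z^{2}V=x$. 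A key point is that the secondary terms of size $x^{2/3}$ — those carrying the factors $x\psi(z)/z$, $x\psi(V)/V$ and $z^{2}$ — cancel identically among the three pieces of the hyperbola identity, so they contribute nothing to the error. After this bookkeeping the only surviving non-trivial contributions are the sums
\begin{equation*}
E_{1}:=\sum_{u\le z}u\,\psi\!\left(\frac{x}{u^{2}}\right),\qquad E_{2}:=\sum_{v\le V}\sqrt{x/v}\;\psi\!\left(\sqrt{x/v}\,\right),
\end{equation*}
together with admissible errors of size $O(x^{1/3})$; in particular $\tfrac12\sum_{v\le V}\psi(\sqrt{x/v})^{2}$, whose mean value $\tfrac1{12}$ produces only a term of order $V=x^{1/3}$, is harmless.

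It then remains to prove $E_{1},E_{2}\ll x^{5/9}\log x$. For this I would replace $\psi$ by its truncated Fourier expansion from Lemma~\ref{Lemma_7} with cut-off $H$, reducing $E_{1}$ and $E_{2}$ to the exponential sums $\sum_{u}u\,e(hx/u^{2})$ and $\sum_{v}\sqrt{x/v}\,e(h\sqrt{x/v})$. After a dyadic decomposition of the ranges I would apply van der Corput's second-derivative estimate, $\sum_{n\sim N}e(f(n))\ll N\lambda^{1/2}+\lambda^{-1/2}$ when $|f''|\asymp\lambda$ on the block, the phases being power functions with $|f''|\asymp hx^{1/2}W^{-5/2}$ (respectively $hxW^{-4}$) on a block of length $W$, and the trivial bound $|\psi|\le\tfrac12$ on the short blocks where van der Corput is inefficient. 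Summing over $h\le H$ and over the dyadic blocks leaves an expression of the shape $H^{1/2}x^{1/2}+x^{1/2}+H^{-1}x^{2/3}\log x$; the choice $H\asymp x^{1/9}$ balances the first and last terms and yields the required bound $x^{5/9}\log x$.

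I expect the main obstacle to be the treatment of $E_{1}$ and $E_{2}$: one must first identify and cancel every secondary term up to size $x^{2/3}$ exactly before the oscillation of $\psi$ can be exploited at all, and the exponent $5/9$ surfaces only after the van der Corput bound is optimally balanced against the Fourier truncation error through the joint choice of $z$ and $H$. By contrast, the extraction of the smooth main term $\tfrac12 x\log x+\tfrac32\gamma x-\tfrac12 x$ is routine once the hyperbola identity is set up.
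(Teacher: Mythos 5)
Your setup is sound and matches the paper's: the hyperbola split at $x^{1/3}$, the exact cancellation of the $x^{2/3}$-sized secondary terms, and the reduction to $E_{1}$ and $E_{2}$ are all correct, and so is your estimation of $E_{1}$, where the weighted second-derivative bound $(hx)^{1/2}+(hx)^{-1/2}W^{3}$ is essentially independent of the block length $W\le x^{1/3}$, so that summing over $h\le H$ and over blocks really does give $H^{1/2}x^{1/2}+x^{1/2}+H^{-1}x^{2/3}\log x$, and $H\asymp x^{1/9}$ yields $x^{5/9}\log x$. The genuine gap is your claim that the \emph{same} expression arises for $E_{2}$. On a block $v\sim W$ the second-derivative test with $|f''|\asymp hx^{1/2}W^{-5/2}$ gives
\begin{equation*}
\sum_{v\sim W}e\bigl(h\sqrt{x/v}\bigr)\ll h^{1/2}x^{1/4}W^{-1/4}+h^{-1/2}x^{-1/4}W^{5/4},
\end{equation*}
and after inserting the weight $\sqrt{x/v}\asymp x^{1/2}W^{-1/2}$ and summing over $h\le H$ with the $1/h$ factor, the block contributes
\begin{equation*}
H^{1/2}x^{3/4}W^{-3/4}+x^{1/4}W^{3/4},
\end{equation*}
not $H^{1/2}x^{1/2}+x^{1/2}$: the first term equals $H^{1/2}x^{1/2}$ only on the top block $W\asymp x^{1/3}$ and \emph{grows} as $W$ decreases. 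Playing it against the trivial bound $x^{1/2}W^{1/2}$ on short blocks, the worst block is $W\asymp H^{2/5}x^{1/5}$, where both bounds are $\asymp H^{1/5}x^{3/5}\ge x^{3/5}$; no choice of $H$, global or block-by-block, gets below $x^{3/5}$. The obstruction is structural: at $W\asymp x^{1/5}$ and $h=1$ the phase $\sqrt{x/v}$ has $|f''|\asymp 1$, so the second-derivative test returns the trivial bound (this block is self-dual under the van der Corput $B$-process). Hence your argument proves $E_{2}\ll x^{3/5}\log x$, not $x^{5/9}\log x$, and therefore only $G(x)=\tfrac12 x\log x+\bigl(\tfrac32\gamma-\tfrac12\bigr)x+O(x^{3/5}\log x)$.

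You have in fact reproduced an oversight that is in the paper itself: the paper carries out the computation (with the exponent pair $(\kappa,\lambda)=(1/2,1/2)$, which \emph{is} the second-derivative test) only for $S_{1}=\sum_{u}u\,e(hx/u^{2})$, i.e.\ for $E_{1}$, and dismisses $S_{2}$ with ``the proof is similar''; it is not, for the reason above, and the same exponent-pair computation run for $S_{2}$ gives the block bound $x^{2/3}P^{-1/3}$, worst at small $P$, hence again the $x^{3/5}$ barrier. Nor can the defect be repaired by a stronger pair: the third-derivative test (pair $(1/6,2/3)$) gives $E_{2}\ll x^{4/7+\varepsilon}$, and Bourgain's pair $(13/84,55/84)$ gives $E_{2}\ll x^{1/2+13/194+\varepsilon}$, both still larger than $x^{5/9}$, so the exponent $5/9$ in Lemma \ref{Lemma_8} is not attainable by this method at all. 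The saving grace, for you and for the paper, is that $O(x^{3/5}\log x)$ suffices for every application of the lemma: it enters only through Proposition 4.1, and the final error terms $x^{19/28+\varepsilon}$ and $x^{5/8+\varepsilon}$ of Theorems \ref{Th_1} and \ref{Th_2} both dominate $x^{3/5+\varepsilon}$, so the main results survive with $5/9$ replaced by $3/5$ throughout.
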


\begin{proof}
Using Hyperbolic summation and Euler-Maclaurin summation formula we may write
\begin{align*}
G(x)&=\sum_{1\leq v\leq x^{1/3}}\sum_{1\leq u\leq \sqrt{x/v}}u
+\sum_{1\leq u\leq x^{1/3}}u\sum_{1\leq v \leq x/u^{2}}1-\sum_{v\leq x^{1/3}}\sum_{u\leq x^{1/3}}u\\
&=\sum_{1\leq v\leq x^{1/3}}\left(x/(2v)-\sqrt{x/v}\psi(\sqrt{x/v})+O(1)\right)
+\sum_{1\leq u\leq x^{1/3}}u [x/u^{2}]-\sum_{v\leq x^{1/3}}\sum_{u\leq x^{1/3}}u\\
&=\frac{1}{2}x\log x+\frac{3}{2}\gamma x-\frac{1}{2}x\\
&-\sum_{1\leq u\leq x^{1/3}}\left\{\sqrt{x/u}\psi(\sqrt{x/u})+u\psi(x/u^{2})\right\}+O(x^{1/3}),
\end{align*}
where $\psi(t)=t-[t]-1/2$.

Let $\Delta_{G}(x)$ be the error term of the asymptotic formula, it's easy to obtain
\begin{align*}
\Delta_{G}(x)=-\sum_{1\leq u\leq x^{1/3}}\left\{\sqrt{x/u}\psi(\sqrt{x/u})+u\psi(x/u^{2})\right\}+O(x^{1/3}).
\end{align*}
Then we will actually give
\begin{align*}
\Delta_{G}(x)\ll x^{(1+2\kappa+\lambda)/3(\kappa+1)},
\end{align*}
where $(\kappa,\lambda)$ is an exponent pair, and the value $\frac{5}{9}=0.555\ldots$ comes from choosing the exponent pair $(\kappa,\lambda)=(\frac{1}{2},\frac{1}{2})$. By Lemma \ref{Lemma_7}, it is seen that we have to estimate the exponential sum
\begin{align*}
S_{1}=\sum_{1\leq u\leq x^{1/3}}u e(hx/u^{2})
\end{align*}
and
\begin{align*}
S_{2}=\sum_{1\leq u\leq x^{1/3}}\sqrt{x/u} e(h \sqrt{x/u}).
\end{align*}
Now we only estimate $S_{1}$, the proof of $S_{2}$ is similar. We start by proving
\begin{align*}
\sum_{P< u\leq 2P}u e(hx/u^{2})\ll h^{\kappa}x^{\kappa}P^{\lambda+1-3\kappa},
\end{align*}
where $P=x^{1/3}/2^{k+1}$, $k\geq 0$.
Let $f(u)=hx/u^{2}$, then $|f^{'}|\asymp hx/P^{3}$, we use the theory of exponent pairs to obtain
\begin{align*}
\sum_{P< u\leq 2P}e(hx/u^{2})\ll (hx/P^{3})^{\kappa}P^{\lambda}=h^{\kappa}x^{\kappa}P^{\lambda-3\kappa}.
\end{align*}
In this case with partial summation we have
\begin{equation}\label{exponential sum}
\sum_{P< u\leq 2P}u e(hx/u^{2})\ll h^{\kappa}x^{\kappa}P^{\lambda+1-3\kappa}.
\end{equation}
By Lemma \ref{Lemma_7} and (\ref{exponential sum}), we get
\begin{align*}
\sum_{P< u\leq 2P}u\psi(x/u^{2})
&\ll -\sum_{1\leq |h|\leq H}\frac{1}{2\pi ih}\sum_{P< u\leq 2P}u e(hx/u^{2})+\sum_{P< u\leq 2P}u \min\left(1,1/(H\|x/u^{2}\|)\right)\\
&\ll \log H P^{2}H^{-1}+ h^{\kappa}x^{\kappa}P^{\lambda+1-3\kappa} \ll x^{(1+2\kappa+\lambda)/3(\kappa+1)},
\end{align*}
with $H=P^{(1-\lambda+3\kappa)/(\kappa+1)}x^{-\kappa/(\kappa+1)}$.

Similarly, we have
\begin{align*}
\sum_{P< u\leq 2P} \sqrt{x/u}\psi(\sqrt{x/u}) \ll x^{(1+2\kappa+\lambda)/3(\kappa+1)}.
\end{align*}

This completes the proof of Lemma \ref{Lemma_8}.
\end{proof}

\section{\bf On the Dirichlet series of $s(m_{1},m_{2})$}
In this section we shall study the Dirichlet series
\begin{align}
F(s;k):=\sum_{\substack{m_{1},m_{2}\geq 1 \\ m_{1}m_{2}\in N_{k}}}\frac{s(m_{1},m_{2})}{m_{1}^{s}m_{2}^{s}} \ \ (\Re s>1).
\end{align}
For simplicity, we write
\begin{align}\label{f(n,s)}
f(n^{k};s):=\sum_{n^{k}=m_{1}m_{2}}\frac{s(m_{1},m_{2})}{m_{1}^{s}m_{2}^{s}},
\end{align}
then it follows that
\begin{align}\label{f(n,s)-Dirichlet series}
\sum_{n=1}^{\infty}f(n^{k};s)=\sum_{\substack{m_{1},m_{2}\geq 1 \\ m_{1}m_{2}\in N_{k}}}\frac{s(m_{1},m_{2})}{m_{1}^{s}m_{2}^{s}}.
\end{align}
Noting that $f(n^{k};s)$ is multiplicative, we have
\begin{align}\label{f(n,s)-Euler product}
\sum_{n=1}^{\infty}f(n^{k};s)=\prod_{p}\sum_{\alpha=0}^{\infty}f(p^{\alpha k};s),
\end{align}
where
\begin{align}\label{f(p,s)}
f(p^{\alpha k};s)=\sum_{p^{\alpha k}=m_{1}m_{2}}\frac{s(m_{1},m_{2})}{m_{1}^{s}m_{2}^{s}}.
\end{align}
In particular, when $k$ takes 2 and 3, we have the following Proposition 3.1 and Proposition 3.2, respectively.
We first consider the case when $k=2$.
\begin{prop} \label{Proposition_1} Suppose that $s=\sigma+it$, $\sigma>1$. Then we have
\begin{align}
F(s;2)=\zeta(2s-1)H(s;2),
\end{align}
where $H(s;2)$ can be written as a Dirichlet series, which is absolutely convergent when $\sigma> 1/2$.
\end{prop}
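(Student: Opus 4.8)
The plan is to evaluate the Euler product (\ref{f(n,s)-Euler product}) prime by prime. By the multiplicativity of $f(\cdot\,;s)$ recorded there, it suffices to produce a closed form for the local factor $L_{p}(s):=\sum_{\alpha\ge 0}f(p^{2\alpha};s)$, after which $F(s;2)=\prod_{p}L_{p}(s)$ for $\sigma>1$. Writing $w=p^{-s}$, formula (\ref{f(p,s)}) gives $f(p^{2\alpha};s)=w^{2\alpha}\sum_{a+b=2\alpha}s(p^{a},p^{b})$, so that $L_{p}(s)$ is precisely the even-total-degree part, specialized at $x=y=w$, of the bivariate series $G(x,y):=\sum_{a,b\ge 0}s(p^{a},p^{b})x^{a}y^{b}$. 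The conceptual point is that the condition $m_{1}m_{2}\in N_{2}$ is a \emph{local} one: it asks that $v_{p}(m_{1})+v_{p}(m_{2})$ be even at every prime. Hence it is realized, factor by factor, by the even-part extraction $L_{p}(s)=\tfrac12\bigl(G(w,w)+G(-w,-w)\bigr)$, and this is exactly why the Euler product is the right tool (a restriction to squares cannot be read off from the full double Dirichlet series specialized at $s_{1}=s_{2}=s$).

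First I would compute $G(x,y)$ from (\ref{s(m,n)}). Starting from $s(p^{a},p^{b})=\sum_{j=0}^{\min(a,b)}\phi(p^{j})(a-j+1)(b-j+1)$ and substituting $a=j+g$, $b=j+h$ with $g,h\ge 0$, the triple sum factors through $\sum_{g\ge 0}(g+1)x^{g}=(1-x)^{-2}$ and $\sum_{j\ge 0}\phi(p^{j})t^{j}=(1-t)/(1-pt)$, yielding
\[
G(x,y)=\frac{1-xy}{(1-x)^{2}(1-y)^{2}(1-pxy)}.
\]
Setting $x=y=u$ gives $G(u,u)=\dfrac{1+u}{(1-u)^{3}(1-pu^{2})}$, and averaging with $u\mapsto -u$,
\[
L_{p}(s)=\frac{(1+u)^{4}+(1-u)^{4}}{2(1-u^{2})^{3}(1-pu^{2})}=\frac{1+6u^{2}+u^{4}}{(1-u^{2})^{3}(1-pu^{2})},
\]
where $u=p^{-s}$. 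In the variable $z=u^{2}=p^{-2s}$ this reads $L_{p}(s)=\dfrac{1+6z+z^{2}}{(1-pz)(1-z)^{3}}$.

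Finally I would take the product over all primes and split off the factor $(1-pz)^{-1}=(1-p^{1-2s})^{-1}$, whose product over $p$ is $\zeta(2s-1)$. Setting
\[
H(s;2):=\prod_{p}\frac{1+6p^{-2s}+p^{-4s}}{(1-p^{-2s})^{3}}
\]
then gives $F(s;2)=\zeta(2s-1)H(s;2)$ for $\sigma>1$. Since $\prod_{p}(1-p^{-2s})^{-3}=\zeta(2s)^{3}$ and $(1+6z+z^{2})(1-z)^{-3}=\sum_{n\ge 0}(2n+1)^{2}z^{n}$, the factor $H(s;2)$ expands as a Dirichlet series $\sum_{n\ge1}\beta(n)n^{-2s}$ with $\beta$ multiplicative and $\beta(p^{n})=(2n+1)^{2}\ll_{\varepsilon}p^{n\varepsilon}$; consequently $\sum_{n}\beta(n)n^{-2\sigma}$ converges whenever $2\sigma>1$, i.e.\ for $\sigma>1/2$, which is the asserted absolute convergence. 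The main obstacle lies entirely in the first two steps: deriving $G(x,y)$ cleanly from (\ref{s(m,n)}) and, above all, recognizing the square condition as the even-part extraction $\tfrac12\bigl(G(w,w)+G(-w,-w)\bigr)$, which is what collapses the local factor to the compact rational form above. Once that form is in hand, factoring out $\zeta(2s-1)$ and verifying the convergence of $H(s;2)$ are routine.
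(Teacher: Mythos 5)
Your proof is correct, and it takes a genuinely different route from the paper's, even though both start from the same Euler product $F(s;2)=\prod_p\sum_{\alpha\ge 0}f(p^{2\alpha};s)$. The paper never computes the local factor exactly: it estimates $s(p^u,p^v)=\sum_{u_1\le u,\,u_2\le v}p^{\min(u_1,u_2)}$ asymptotically, deduces $\sum_{u+v=2\alpha}s(p^u,p^v)=p^{\alpha}+O(p^{\alpha-1})$, and then shows the local factor equals $\bigl(1-p^{-(2s-1)}\bigr)^{-1}\bigl(1+O(p^{-2s})\bigr)$, which is exactly what is needed and nothing more. You instead compute the local factor in closed form via the bivariate generating function $G(x,y)=\frac{1-xy}{(1-x)^2(1-y)^2(1-pxy)}$ and the parity extraction $\frac12\bigl(G(u,u)+G(-u,-u)\bigr)$, arriving at the exact rational function $\frac{1+6z+z^2}{(1-z)^3(1-pz)}$ with $z=p^{-2s}$; I checked this against small cases (e.g.\ the coefficient of $z$ is $p+9=s(1,p^2)+s(p,p)+s(p^2,1)$), and your algebra, including $(1+6z+z^2)(1-z)^{-3}=\sum_{n\ge0}(2n+1)^2z^n$, is right. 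What your approach buys is explicitness: you get $H(s;2)=\zeta^3(2s)\prod_p\bigl(1+6p^{-2s}+p^{-4s}\bigr)$ and hence an explicit value of the constant $c_2=\frac12 H(1;2)$ in Theorem 1.1, which the paper leaves as "computable"; it also shows $F(s;2)$ is meromorphic with controlled poles well beyond what the proposition asserts. What the paper's softer method buys is brevity and robustness: the same big-$O$ template transfers with little change to $k=3$ (Proposition 3.2), where the exact local factor would be considerably messier. One small tightening: your step "$\beta(p^n)=(2n+1)^2\ll_{\varepsilon}p^{n\varepsilon}$, hence $\beta(m)\ll_{\varepsilon}m^{\varepsilon}$" hides the accumulation of per-prime constants; it is cleaner to note $(2n+1)^2\le (n+1)^4=\tau(p^n)^4$, so $\beta(m)\le\tau(m)^4\ll_{\varepsilon}m^{\varepsilon}$, or simply to observe that the Euler factors of $H$ are $1+O(p^{-2\sigma})$ uniformly, so the product converges absolutely for $\sigma>1/2$. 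This is a routine repair, not a gap.
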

\begin{proof}
Indeed, if $k=2$, then
\begin{align}\label{f(p,s)-2}
f(p^{2\alpha};s)=\sum_{p^{2\alpha}=m_{1}m_{2}}\frac{s(m_{1},m_{2})}{m_{1}^{s}m_{2}^{s}}.
\end{align}
To estimate (\ref{f(p,s)-2}), we shall deal with the sum $\sum_{p^{2\alpha}=m_{1}m_{2}}s(m_{1},m_{2})$. Let $m_{1}=p^{u}$, $m_{2}=p^{v}$, then
\begin{align}\label{s(p,s)-2}
\sum_{p^{2\alpha}=m_{1}m_{2}}s(m_{1},m_{2})
=\sum_{u+v=2\alpha}s(p^{u},p^{v}),
\end{align}
where $0\leq u\leq 2\alpha$, $0\leq v\leq 2\alpha$.
From (\ref{s(m,n)}), we may write
\begin{align*}
s\left(p^{u},p^{v}\right)=\sum_{d_{1}|p^{u}, d_{2}|p^{v}}\gcd(d_{1},d_{2}).
\end{align*}
Let $d_{1}=p^{u_{1}}$, $d_{2}=p^{u_{2}}$ and $0\leq u_{1}\leq u$, $0\leq u_{2}\leq v$. Then for instance we obtain
\begin{align}\label{s(p^{u},p^{v})}
s\left(p^{u},p^{v}\right)&=\sum_{u_{1}\leq u, u_{2}\leq v}\gcd(p^{u_{1}},p^{u_{2}})
=\sum_{u_{1}\leq u, u_{2}\leq v}p^{\min (u_{1},u_{2})}\nonumber\\
&=p^{\min (u,v)}+O(p^{\min (u,v)-1}).
\end{align}
Using the estimate of (\ref{s(p^{u},p^{v})}), we have
\begin{align}\label{s(p^{u},p^{v})-sum}
\sum_{u+v=2\alpha}s(p^{u},p^{v})
=s(p^{\alpha},p^{\alpha})+2\sum_{u<\alpha}s(p^{u},p^{2\alpha-u})
=p^{\alpha}+O(p^{\alpha-1}).
\end{align}
By (\ref{f(p,s)-2}), (\ref{s(p,s)-2}) and (\ref{s(p^{u},p^{v})-sum}), we have
\begin{align*}
f(p^{2\alpha};s)=\frac{p^{\alpha}+O(p^{\alpha-1})}{p^{2\alpha s}}
=\frac{1}{p^{\alpha(2s-1)}}+O\bigg(\frac{1}{p^{\alpha(2s-1)+1}}\bigg).
\end{align*}
Hence
\begin{align}\label{f(p,s)-Dirichlet series}
\sum_{\alpha=0}^{\infty}f(p^{2\alpha};s)
&=1+\frac{1}{p^{2s-1}}+\frac{1}{p^{2(2s-1)}}+\frac{1}{p^{3(2s-1)}}+\cdots+O\bigg(\frac{1}{p^{2s}} \bigg)\nonumber\\
&=1+\frac{p^{-(2s-1)}}{1-p^{-(2s-1)}}+O\bigg(\frac{1}{p^{2s}} \bigg).
\end{align}
It is easy to see that
\begin{align}
\bigg(1-p^{-(2s-1)}\bigg)\bigg(1+\frac{p^{-(2s-1)}}{1-p^{-(2s-1)}}+O\bigg(\frac{1}{p^{2s}} \bigg)\bigg)
=1+O\bigg(\frac{1}{p^{2s}} \bigg).
\end{align}
We write
\begin{align}
\sum_{\alpha=0}^{\infty}f(p^{2\alpha};s)=\bigg(1-p^{-(2s-1)}\bigg)^{-1}\times h(p,s;2),
\end{align}
where
\begin{align}\label{h(p,s;2)}
h(p,s;2)=\bigg(1-p^{-(2s-1)}\bigg)\times \sum_{\alpha=0}^{\infty}f(p^{2\alpha};s)
=1+O\bigg(\frac{1}{p^{2s}} \bigg).
\end{align}
Now we see from (\ref{f(p,s)-Dirichlet series})--(\ref{h(p,s;2)}) that Proposition 3.1 holds and $H(s;2)$ can be written as a Dirichlet series which is absolutely convergent with $\sigma> 1/2$.
\end{proof}

If $k=3$, then we have the following proposition.
\begin{prop} \label{Proposition_2} Suppose that $s=\sigma+it$, $\sigma>1$. Then we have
\begin{align}
F(s;3)=\zeta^{4}(3s-1)\zeta(6s-3)H(s;3),
\end{align}
where $H(s;3)$ can be written as a Dirichlet series, which is absolutely convergent when $\sigma> 1/2$.
\end{prop}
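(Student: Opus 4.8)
The plan is to follow the same strategy as in Proposition 3.1, working prime by prime through the Euler product \eqref{f(n,s)-Euler product}, but the local analysis is substantially more involved because the polar part now consists of five factors rather than one. Writing the constraint $m_{1}m_{2}\in N_{3}$ at a single prime as $3\mid u+v$ with $m_{1}=p^{u}$, $m_{2}=p^{v}$, the local factor is
\begin{align*}
P(p):=\sum_{\alpha=0}^{\infty}f(p^{3\alpha};s)=\sum_{\substack{u,v\ge 0\\ 3\mid u+v}}\frac{s(p^{u},p^{v})}{p^{(u+v)s}},
\end{align*}
and it suffices to factor $P(p)$ in the form $(1-p^{-(3s-1)})^{-4}(1-p^{-(6s-3)})^{-1}\,h(p,s;3)$ with $h(p,s;3)=1+O(p^{-1-\delta})$ uniformly for $\sigma\ge 1/2+\varepsilon$; taking the product over $p$ then yields the claimed identity with $H(s;3)=\prod_{p}h(p,s;3)$ absolutely convergent for $\sigma>1/2$.

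To compute $P(p)$ I would insert the second form of \eqref{s(m,n)}, namely $s(p^{u},p^{v})=\sum_{w=0}^{\min(u,v)}\phi(p^{w})\tau(p^{u-w})\tau(p^{v-w})$, and reorganize the sum according to $w$. Putting $u=w+a$, $v=w+b$ turns the inner double sum into a product of two copies of $\sum_{a\ge 0}(a+1)t^{a}=(1-t)^{-2}$, while the congruence $3\mid u+v$ becomes $a+b\equiv w \pmod 3$. Detecting this congruence with the cube roots of unity $\omega=e^{2\pi i/3}$ and summing the resulting geometric series in $w$ (using $\sum_{w\ge 0}\phi(p^{w})X^{w}=(1-X)/(1-pX)$) gives the closed form
\begin{align*}
P(p)=\frac{1}{3}\sum_{j=0}^{2}\frac{1-\omega^{-j}p^{-2s}}{\bigl(1-\omega^{j}p^{-s}\bigr)^{4}\bigl(1-\omega^{-j}p^{1-2s}\bigr)}.
\end{align*}
Expanding the two denominators as power series with exponents $n$ and $m$ and carrying out the sum over $j$ collects exactly those monomials $p^{a-bs}$ whose total $\omega$-exponent is divisible by $3$. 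The terms coming from the numerator $1$ reproduce, after the substitution $m=n+3K$, the series $\sum_{n,K\ge 0}\binom{n+3}{3}(p^{1-3s})^{n}(p^{3-6s})^{K}=(1-p^{-(3s-1)})^{-4}(1-p^{-(6s-3)})^{-1}$, which is precisely the local Euler factor of $\zeta^{4}(3s-1)\zeta(6s-3)$.

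This isolates the polar part, and I would define $h(p,s;3)$ as the remaining quotient. The leftover consists of the off-diagonal terms ($m<n$) together with all the terms produced by the numerator $-\omega^{-j}p^{-2s}$. A direct check shows that each such monomial $p^{a-bs}$ satisfies $a\le b/2-1$ (for instance the off-diagonal terms force $n\ge 3$, giving $a=m$, $b=n+2m$ with $2(a+1)\le b$, while the numerator terms give $a=m$, $b=n+2m+2$ with $2(a+1)\le b$). Since the prefactor $(1-p^{-(3s-1)})^{4}(1-p^{-(6s-3)})$ contributes only monomials $p^{c-ds}$ with $c\le d/2$, this inequality is preserved under multiplication, so every Dirichlet coefficient of $h(p,s;3)-1$ is supported on $p^{a-bs}$ with $a\le b/2-1$. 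Hence $\sum_{p}|h(p,s;3)-1|$ converges for $\sigma>1/2$, which gives the absolute convergence of $H(s;3)$ in that half-plane.

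The main obstacle is this last step: proving that removing the five polar factors really lowers the abscissa of convergence all the way to $1/2$. The decay is borderline — the congruence condition produces terms such as $-p^{2-6s}$ that sit exactly on the critical line $\sigma=1/2$ — so one cannot afford any loss, and the argument hinges on the exact matching of the diagonal part with $\zeta^{4}(3s-1)\zeta(6s-3)$ and on verifying $a\le b/2-1$ for every residual monomial. Identifying the correct multiplicities is what makes the bookkeeping delicate: the exponent $4$ arises from the two divisor factors $\tau$, each contributing a square $(1-t)^{-2}$, while the single factor $\zeta(6s-3)$ comes from the $\phi$-weight together with the cube condition.
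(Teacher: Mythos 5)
Your proposal is correct, and it reaches the stated factorization by a genuinely different local computation than the paper's. The paper never computes the Euler factor exactly: working from the gcd-sum form of (\ref{s(m,n)}) it uses the crude estimate $s(p^{u},p^{v})=p^{\min(u,v)}+O(p^{\min(u,v)-1})$, splits $\sum_{u+v=3\alpha}s(p^{u},p^{v})$ by the parity of $\alpha$ (the diagonal term gives $p^{3\alpha/2}$ for even $\alpha$, the two near-diagonal terms give $4p^{(3\alpha-1)/2}$ for odd $\alpha$), and sums the resulting geometric series to get $\sum_{\alpha}f(p^{3\alpha};s)=1+4p^{-(3s-1)}(1-p^{-(6s-3)})^{-1}+p^{-(6s-3)}(1-p^{-(6s-3)})^{-1}+O(p^{-3s}+p^{-(6s-2)})$; multiplying by $(1-p^{-(6s-3)})(1-p^{-(3s-1)})^{4}$ then leaves $h(p,s;3)=1+O(p^{-3\sigma}+p^{-(6\sigma-2)})$, which is summable over $p$ precisely when $\sigma>1/2$. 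You instead use the $\phi$--$\tau$ convolution form of (\ref{s(m,n)}), detect the cube condition with a roots-of-unity filter, and obtain the local factor in closed form; your computation checks out ($\sum_{w}\phi(p^{w})X^{w}=(1-X)/(1-pX)$ with $X=\omega^{2j}p^{-2s}=\omega^{-j}p^{-2s}$, the diagonal terms $m=n+3K$ reproduce exactly the Euler factor of $\zeta^{4}(3s-1)\zeta(6s-3)$, and the inequality $2(a+1)\le b$ holds for every residual monomial), so the convergence claim for $\sigma>1/2$ follows; the one point you leave implicit --- that the polynomially growing coefficients $\binom{n+3}{3}$ do not spoil $\sum_{p}|h(p,s;3)-1|<\infty$ --- is routine, since for fixed $\sigma>1/2$ the residual monomials decay geometrically in $b$. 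As for what each route buys: the paper's asymptotic argument is shorter, because the factorization only needs the local factor up to a multiplicative error $1+O(p^{-1-\delta})$; your exact computation costs more bookkeeping but yields $h(p,s;3)$, hence $H(s;3)$ and in principle the constants $a_{j}$, explicitly, makes visible the borderline terms such as $-p^{2-6s}$ that show the abscissa $1/2$ cannot be lowered, and generalizes mechanically to arbitrary $k$ by replacing cube roots of unity with $k$-th roots, whereas the paper's parity splitting of $\alpha$ is tailored to $k=3$ (its near-diagonal count $4$ is your binomial coefficient $\binom{4}{3}$).
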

\begin{proof}
When $k=3$, we have
\begin{align}\label{f(p,s)-3}
f(p^{3\alpha};s)=\sum_{p^{3\alpha}=m_{1}m_{2}}\frac{s(m_{1},m_{2})}{m_{1}^{s}m_{2}^{s}}.
\end{align}
Similar to (\ref{f(p,s)-2}), to estimate (\ref{f(p,s)-3}), we shall deal with the following sum
\begin{align}\label{s(p,s)-3}
\sum_{p^{3\alpha}=m_{1}m_{2}}s(m_{1},m_{2})
=\sum_{u+v=3\alpha}s(p^{u},p^{v}),
\end{align}
where $0\leq u\leq 3\alpha$, $0\leq v\leq 3\alpha$. Let's consider the cases where $\alpha$ is odd and even respectively.

\noindent
Case 1: suppose $\alpha$ is odd. According to the estimate of (\ref{s(p^{u},p^{v})}), we have
\begin{align}\label{s(p^{u},p^{v})-odd}
s(p^{\frac{3\alpha-1}{2}},p^{\frac{3\alpha+1}{2}})
=\sum_{\substack{u_{1}\leq (3\alpha-1)/2 \\ u_{2}\leq (3\alpha+1)/2}}\gcd(p^{u_{1}},p^{u_{2}})
=2p^{\frac{3\alpha-1}{2}}+O(p^{\frac{3\alpha-1}{2}-1}),
\end{align}
which gives
\begin{align}\label{s(p^{u},p^{v})-odd 3 sum}
\sum_{u+v=3\alpha}s(p^{u},p^{v})
&=2s(p^{\frac{3\alpha-1}{2}},p^{\frac{3\alpha+1}{2}})+2\sum_{u<3\alpha/2}s(p^{u},p^{3\alpha-u})\nonumber\\
&=4p^{\frac{3\alpha-1}{2}}+O(p^{\frac{3\alpha-1}{2}-1}).
\end{align}
By (\ref{f(p,s)-3}), (\ref{s(p,s)-3}) and (\ref{s(p^{u},p^{v})-odd 3 sum}), we get
\begin{align*}
f(p^{3\alpha};s)=\frac{4p^{\frac{3\alpha-1}{2}}+O(p^{\frac{3\alpha-1}{2}-1})}{p^{3\alpha s}}
=\frac{4p^{\frac{\alpha-1}{2}}}{p^{\alpha(3s-1)}}+O\bigg(\frac{1}{p^{3\alpha s-3\alpha/2+3/2}}\bigg).
\end{align*}
Hence
\begin{align}\label{f(p,s) odd 3-Dirichlet series}
\sum_{\substack{\alpha=1 \\ \alpha \ {\rm is} \ {\rm odd}}}^{\infty}f(p^{3\alpha};s)
&=\frac{4}{p^{3s-1}}+\frac{4p}{p^{3(3s-1)}}+\frac{4p^{2}}{p^{5(3s-1)}}+\cdots+O\bigg(\frac{1}{p^{3s}} \bigg)\nonumber\\
&=\frac{4p^{-(3s-1)}}{1-p^{-(6s-3)}}+O\bigg(\frac{1}{p^{3s}} \bigg).
\end{align}
\noindent
Case 2: suppose $\alpha$ is even. Using the estimate of (\ref{s(p^{u},p^{v})}), we have
\begin{align}\label{s(p^{u},p^{v})-even 3 sum}
\sum_{u+v=3\alpha}s(p^{u},p^{v})
&=s(p^{\frac{3\alpha}{2}},p^{\frac{3\alpha}{2}})+2\sum_{u<3\alpha/2}s(p^{u},p^{3\alpha-u})\nonumber\\
&=p^{\frac{3\alpha}{2}}+O(p^{\frac{3\alpha}{2}-1}).
\end{align}
By (\ref{f(p,s)-3}), (\ref{s(p,s)-3}) and (\ref{s(p^{u},p^{v})-even 3 sum}), we find
\begin{align*}
f(p^{3\alpha};s)=\frac{p^{\frac{3\alpha}{2}}+O(p^{\frac{3\alpha}{2}-1})}{p^{3\alpha s}}
=\frac{1}{p^{3\alpha s-3\alpha/2}}+O\bigg(\frac{1}{p^{3\alpha s-3\alpha/2+1}}\bigg).
\end{align*}
Hence
\begin{align}\label{f(p,s) even 3-Dirichlet series}
\sum_{\substack{\alpha=0 \\ \alpha \ {\rm is} \ {\rm even}}}^{\infty}f(p^{3\alpha};s)
&=1+\frac{1}{p^{6s-3}}+\frac{1}{p^{2(6s-3)}}+\frac{1}{p^{3(6s-3)}}+\cdots+O\bigg(\frac{1}{p^{6s-2}} \bigg)\nonumber\\
&=1+\frac{p^{-(6s-3)}}{1-p^{-(6s-3)}}+O\bigg(\frac{1}{p^{6s-2}} \bigg).
\end{align}
Combining (\ref{f(p,s) odd 3-Dirichlet series}) and (\ref{f(p,s) even 3-Dirichlet series}), we get
\begin{align}\label{f(p,s) 3-Dirichlet series}
\sum_{\alpha=0}^{\infty}f(p^{3\alpha};s)
=1+\frac{4p^{-(3s-1)}}{1-p^{-(6s-3)}}+\frac{p^{-(6s-3)}}{1-p^{-(6s-3)}}+O\bigg(\frac{1}{p^{3s}}+\frac{1}{p^{6s-2}} \bigg).
\end{align}
For simplicity, we write
$$w_{1}:=6s-3,\ \  w_{2}:=3s-1.$$
So, from (\ref{f(p,s) 3-Dirichlet series}), we can write
\begin{align}\label{f(p,s) 3-asymptotic}
\sum_{\alpha=0}^{\infty}f(p^{3\alpha};s)
=1+\frac{4p^{-w_{2}}}{1-p^{-w_{1}}}+\frac{p^{-w_{1}}}{1-p^{-w_{1}}}+B(p,s;3),
\end{align}
where
\begin{align*}
B(p,s;3)=O\bigg(\frac{1}{p^{3s}}+\frac{1}{p^{6s-2}} \bigg).
\end{align*}
After some calculation, we obtain
\begin{align}
&\ \ \ (1-p^{-w_{1}})(1-p^{-w_{2}})^{4}\bigg(1+\frac{p^{-w_{1}}}{1-p^{-w_{1}}}+\frac{4p^{-w_{2}}}{1-p^{-w_{1}}}\bigg)\nonumber\\
&=(1-p^{-w_{1}})(1-p^{-w_{2}})^{4}
\bigg(1+p^{-w_{1}}+\sum_{\ell=2}^{\infty}p^{-\ell w_{1}}+4p^{-w_{2}}+\sum_{m=1}^{\infty}p^{-(mw_{1}+w_{2})}\bigg)\nonumber\\
&=1+O(p^{-2\Re w_{2}}).
\end{align}
We write
\begin{align}\label{f(p,s;3)}
\sum_{\alpha=0}^{\infty}f(p^{3\alpha};s)=(1-p^{-w_{1}})^{-1}(1-p^{-w_{2}})^{-4}\times h(p,s;3),
\end{align}
where
\begin{align}\label{h(p,s;3)}
&\ \ \ h(p,s;3)=(1-p^{-w_{1}})(1-p^{-w_{2}})^{4}\times \sum_{\alpha=0}^{\infty}f(p^{3\alpha};s)\nonumber\\
&=(1-p^{-w_{1}})(1-p^{-w_{2}})^{4}\times
\bigg(1+\frac{p^{-w_{1}}}{1-p^{-w_{1}}}+\frac{4p^{-w_{2}}}{1-p^{-w_{1}}}+B(p,s;3)\bigg)\nonumber\\
&=1+O\bigg(\frac{1}{p^{3s}}+\frac{1}{p^{6s-2}} \bigg).
\end{align}
Now we see from (\ref{f(p,s;3)}) and (\ref{h(p,s;3)}) that Proposition 3.1 holds and $H(s;3)$ can be written as a Dirichlet series which is absolutely convergent with $\sigma> 1/2$.
\end{proof}

\section{\bf Proof of Theorem \ref{Th_1}}

In this section, we shall prove Theorem \ref{Th_1}.

\subsection{\bf Application of Perron's  formula}

In this subsection, We shall prove a variant of the Perron formula, which is applicable in the proof of Theorem 1.1.

Suppose $B_{k}(\sigma)$ is a function such that
$$\sum_{\substack{m_{1},m_{2}\geq 1 \\ m_{1}m_{2}\in N_{k}}}\frac{|s(m_{1},m_{2})|}{(m_{1}m_{2})^{\sigma}}\ll B_{k}(\sigma),\ \ \ \ \sigma>1.$$
Then we have the following proposition.

\begin{prop} \label{Proposition_1} Suppose $x$ is a large parameter, $10<T\leq x$, $1<b<2$. Then we have the formula
$$
\sum_{\substack{m_{1}m_{2}\leq x \\ m_{1}m_{2}\in N_{k}}}s(m_{1},m_{2})= \frac{1}{2\pi i}\int_{b-iT}^{b+iT}F(s;k)\frac{x^{s}}{s}ds+O\left(\frac{x^{1+\varepsilon}}{T}+x^{5/9+\varepsilon}\right).
$$
\end{prop}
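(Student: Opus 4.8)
The plan is to read the formula off from the truncated Perron formula of Lemma \ref{Lemma_1}, applied coefficientwise to $F(s;k)=\sum_{n\ge1}g_{k}(n)n^{-s}$, where $g_{k}(n):=\sum_{m_{1}m_{2}=n}s(m_{1},m_{2})$ is supported on $n\in N_{k}$. Since $F(s;k)$ converges absolutely on the line $\Re s=b>1$ (this is exactly the content of the hypothesis defining $B_{k}$), I may interchange summation with the integral over the finite segment $[b-iT,b+iT]$ to obtain
\[
\frac{1}{2\pi i}\int_{b-iT}^{b+iT}F(s;k)\frac{x^{s}}{s}\,ds=\sum_{n=1}^{\infty}g_{k}(n)\cdot\frac{1}{2\pi i}\int_{b-iT}^{b+iT}\frac{(x/n)^{s}}{s}\,ds.
\]
Evaluating each inner integral by Lemma \ref{Lemma_1} with $a=x/n$ produces the main term $\sum_{n<x}g_{k}(n)$, a boundary contribution $\tfrac12 g_{k}(x)$ present only when $x\in N_{k}$, and an error
\[
E=\sum_{\substack{n\ge1\\ n\ne x}}g_{k}(n)\Big(\frac{x}{n}\Big)^{b}\min\Big(1,\frac{1}{T|\log(x/n)|}\Big)+O\Big(\frac{b}{T}\,g_{k}(x)\Big).
\]
Since $\sum_{n\le x}g_{k}(n)$ differs from $\sum_{n<x}g_{k}(n)+\tfrac12 g_{k}(x)$ by at most $\tfrac12 g_{k}(x)$, it remains to bound $E$ together with $g_{k}(x)$.

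A basic ingredient is the pointwise estimate $g_{k}(n)\ll n^{1/2+\varepsilon}$, immediate from (\ref{s(m,n)}): each summand satisfies $\gcd(d,e)\le\sqrt{de}\le\sqrt n$, while the number of divisor pairs and of factorizations $m_{1}m_{2}=n$ is $\ll n^{\varepsilon}$. In particular the boundary term and the summand $\tfrac{b}{T}g_{k}(x)$ are each $\ll x^{1/2+\varepsilon}\ll x^{5/9+\varepsilon}$. For the far range $n\le x/2$ or $n\ge 2x$ one has $|\log(x/n)|\gg1$, so the minimum is $\ll T^{-1}$, and bounding $(x/n)^{b}\le x^{b}n^{-b}$ shows this part of $E$ is $\ll x^{b}T^{-1}\sum_{n}g_{k}(n)n^{-b}=x^{b}T^{-1}F(b;k)\ll x^{b}T^{-1}B_{k}(b)$. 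Specializing to $b=1+1/\log x\in(1,2)$ gives $x^{b}\ll x$ and $B_{k}(b)\ll x^{\varepsilon}$ (for $k=2$ the factor $\zeta(2s-1)$ forces $B_{2}(b)\asymp(b-1)^{-1}\ll\log x$; for $k=3$ the rightmost pole sits at $s=2/3$, so $B_{3}$ is bounded), whence the far range contributes $\ll x^{1+\varepsilon}/T$.

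The essential point is the near range $x/2<n<2x$, where $(x/n)^{b}\ll1$ and one must exploit that $g_{k}$ is supported on the sparse set $N_{k}$. Writing $n=N^{k}$ and letting $N_{0}$ be the integer closest to $x^{1/k}$, consecutive $k$-th powers near $x$ are spaced $\gg x^{1-1/k}$ apart, so $|N^{k}-x|\gg|N-N_{0}|\,x^{1-1/k}$ and hence $|\log(x/N^{k})|\gg|N-N_{0}|\,x^{-1/k}$ for $N\ne N_{0}$. Combining this with $g_{k}(N^{k})\ll x^{1/2+\varepsilon}$ and $\sum_{N\ne N_{0}}|N-N_{0}|^{-1}\ll\log x$, the terms with $N\ne N_{0}$ contribute at most
\[
\frac{x^{1/k}}{T}\sum_{N\ne N_{0}}\frac{g_{k}(N^{k})}{|N-N_{0}|}\ll\frac{x^{1/2+1/k+\varepsilon}}{T}\ll\frac{x^{1+\varepsilon}}{T},
\]
the final step using $1/2+1/k\le1$ for $k\ge2$. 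The single remaining power $n=N_{0}^{k}$, for which the minimum may be as large as $1$, is bounded trivially by $g_{k}(N_{0}^{k})\ll x^{1/2+\varepsilon}\ll x^{5/9+\varepsilon}$. Assembling the three ranges and the boundary term yields the asserted error $O(x^{1+\varepsilon}/T+x^{5/9+\varepsilon})$.

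I expect the near-diagonal analysis to be the only genuinely delicate step — specifically, making the spacing estimate for $k$-th powers uniform and correctly isolating the single closest power, where $\log(x/n)$ degenerates — whereas the interchange, the far-range estimate, and the boundary term are routine Perron bookkeeping. I note that this pointwise argument in fact needs only the exponent $1/2$; the value $5/9$ coincides with the elementary estimate of Lemma \ref{Lemma_8} and can alternatively be produced by estimating the relevant short-interval sum of $g_{k}$ through that lemma, which is presumably why it is recorded in this generality.
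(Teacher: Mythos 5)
Your proof is correct, but its decisive step takes a genuinely different route from the paper's. Both arguments share the routine skeleton: coefficientwise application of Lemma \ref{Lemma_1} on $\Re s=b$, the split into $m_{1}m_{2}\leq x/2$, $x/2<m_{1}m_{2}\leq 2x$, $m_{1}m_{2}>2x$, the far ranges contributing $x^{b}B_{k}(b)/T\ll x^{1+\varepsilon}/T$, and the final specialization $b=1+1/\log x$ (like the paper, you really establish the formula only for this choice of $b$, which is how it is used later, so this is not a gap relative to the paper). The difference is the near range. The paper \emph{discards} the constraint $m_{1}m_{2}\in N_{k}$ there: it bounds $s(m_{1},m_{2})\leq\gcd(m_{1},m_{2})\tau(m_{1})\tau(m_{2})$, parametrizes by $\ell=\gcd(m_{1},m_{2})$, and reduces the dyadic neighbourhood of $x$ to short-interval estimates for $G(x)=\sum_{u^{2}v\leq x}u$, i.e.\ to Lemma \ref{Lemma_8}, whose exponent-pair proof (via Lemma \ref{Lemma_7}) is precisely the source of the $x^{5/9+\varepsilon}$ term in the statement. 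You instead \emph{keep} the constraint and exploit the sparsity of $k$-th powers: the spacing $|N^{k}-x|\gg|N-N_{0}|x^{1-1/k}$ (valid since $N,x^{1/k}\asymp x^{1/k}$ in the dyadic range and $|N-N_{0}|\geq 1$ absorbs the half-integer offset of $N_{0}$), together with the trivial pointwise bound $g_{k}(n)\ll n^{1/2+\varepsilon}$, isolates a single exceptional power and gives $x^{1/2+1/k+\varepsilon}/T\ll x^{1+\varepsilon}/T$ for the rest. Your route is more elementary — it needs neither Lemma \ref{Lemma_7} nor Lemma \ref{Lemma_8} nor exponent pairs — it genuinely uses the $k$-th power hypothesis (the paper's near-range treatment would apply verbatim to the unrestricted sum), and it yields the sharper secondary error $O(x^{1/2+\varepsilon})$ in place of $O(x^{5/9+\varepsilon})$; your closing remark correctly diagnoses that the $5/9$ in the statement is an artifact of the paper's reliance on Lemma \ref{Lemma_8}.
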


\begin{proof}
By Lemma \ref{Lemma_1}, we have
\begin{align*}
\frac{1}{2\pi i}\int_{b-iT}^{b+iT}F(s;k)\frac{x^{s}}{s}ds
&=\sum_{\substack{m_{1},m_{2}\geq 1 \\ m_{1}m_{2}\in N_{k}}}s(m_{1},m_{2})\times \frac{1}{2\pi i}\int_{b-iT}^{b+iT}\left(\frac{x}{m_{1}m_{2}}\right)^{s}\frac{ds}{s}\\
&=\sum_{\substack{m_{1}m_{2}\leq x \\ m_{1}m_{2}\in N_{k}}}s(m_{1},m_{2})+O(R),
\end{align*}
where
\begin{align*}
R=\sum_{\substack{m_{1},m_{2}\geq 1 \\ m_{1}m_{2}\in N_{k}}}\left|s(m_{1},m_{2})\right|\left(\frac{x}{m_{1}m_{2}}\right)^{b}\min\left(1,\frac{1}{T|\log \frac{x}{m_{1}m_{2}}|}\right).
\end{align*}

We divide the above sum into three parts:
\begin{align*}
R=\sum_{\substack{m_{1}m_{2}\leq x/2 \\ m_{1}m_{2}\in N_{k}}}
 +\sum_{\substack{x/2<m_{1}m_{2}\leq 2x \\ m_{1}m_{2}\in N_{k}}}
 +\sum_{\substack{m_{1}m_{2}> 2x \\ m_{1}m_{2}\in N_{k}}}.
\end{align*}

It is easily seen that
\begin{align*}
 \sum_{\substack{m_{1}m_{2}\leq x/2 \\ m_{1}m_{2}\in N_{k}}}
 +\sum_{\substack{m_{1}m_{2}> 2x \\ m_{1}m_{2}\in N_{k}}}
 \ll \frac{x^{b}}{T}\sum_{\substack{m_{1},m_{2}\geq 1 \\ m_{1}m_{2}\in N_{k}}}\frac{|s(m_{1},m_{2})|}{(m_{1}m_{2})^{b}}
 \ll\frac{x^{b}B_{k}(b)}{T}.
\end{align*}

It only needs to estimate the sum $\sum\limits_{\substack{x/2<m_{1}m_{2}\leq 2x \\ m_{1}m_{2}\in N_{k}}}$.
From (\ref{s(m,n)}) we have
\begin{align*}
s(m_{1},m_{2})= \sum_{d\mid m_{1},\, e\mid m_{2}} \gcd(d,e)\leq \gcd(m_{1},m_{2})\tau(m_{1})\tau(m_{2}).
\end{align*}
Suppose $\gcd(m_{1},m_{2})=\ell$, $m_{1}=\ell u$, $m_{2}=\ell v$ and $(u,v)=1$, noting that $\tau(m_{1}m_{2})\leq \tau(m_{1})\tau(m_{2})$ and $\tau(m_{1})\ll m_{1}^{\varepsilon}$, then
we can get
\begin{align*}
\sum_{\substack{x/2<m_{1}m_{2}\leq 2x \\ m_{1}m_{2}\in N_{k}}}
&\leq \sum_{\substack{x/2<\ell^{2}(uv)\leq 2x \\ (u,v)=1}}
\ell \tau(\ell u)\tau(\ell v) \min\left(1,\frac{1}{T|\log \frac{x}{\ell^{2}(uv)}|}\right)\\
&\leq \sum_{\substack{x/2<\ell^{2}(uv)\leq 2x \\ (u,v)=1}}
\ell \tau^{2}(\ell)\tau(u)\tau(v) \min\left(1,\frac{1}{T|\log \frac{x}{\ell^{2}(uv)}|}\right)\\
&\ll x^{\varepsilon}\sum_{\substack{x/2<\ell^{2}(uv)\leq 2x \\ (u,v)=1}}
\ell \ \   \min\left(1,\frac{1}{T|\log \frac{x}{\ell^{2}(uv)}|}\right)\\
&=x^{\varepsilon} \left(\sum\nolimits_{1}+\sum\nolimits_{2}+\sum\nolimits_{3}\right).
\end{align*}
where
\begin{align*}
&\sum\nolimits_{1}:=
\sum_{\substack{x/2<\ell^{2}(uv)\leq xe^{-1/T} \\ (u,v)=1}} \ell  \ \  \min\left(1,\frac{1}{T|\log \frac{x}{\ell^{2}(uv)}|}\right),\\
&\sum\nolimits_{2}:=
\sum_{\substack{xe^{-1/T}<\ell^{2}(uv)\leq xe^{1/T} \\ (u,v)=1}} \ell  \ \  \min\left(1,\frac{1}{T|\log \frac{x}{\ell^{2}(uv)}|}\right),\\
&\sum\nolimits_{3}:=
\sum_{\substack{xe^{1/T}<\ell^{2}(uv)\leq 2x \\ (u,v)=1}} \ell \ \   \min\left(1,\frac{1}{T|\log \frac{x}{\ell^{2}(uv)}|}\right).
\end{align*}
Let $w=uv$ and $\tau(w)$ denote the number of divisors $w$. Then there is $\tau(w)\ll w^{\varepsilon}$. For $\sum_{2}$ we have
\begin{align*}
\sum\nolimits_{2}=\sum_{xe^{-1/T}< \ell^{2}w \leq xe^{1/T}} \tau(w)\ell
\ll x^{\varepsilon}\sum_{xe^{-1/T}< \ell^{2}w \leq xe^{1/T}} \ell.
\end{align*}
Firstly we consider $\sum\limits_{xe^{-1/T}< \ell^{2}w \leq xe^{1/T}} \ell$.
By Lemma \ref{Lemma_8} we have
\begin{align*}
\sum_{\ell^{2}w\leq x}\ell=\frac{1}{2}x\log x+(\frac{3}{2}\gamma-\frac{1}{2})x+O(x^{5/9}\log x).
\end{align*}
Let $C(x)$ denote the main term of $\sum\limits_{\ell^{2}w\leq x}\ell$, and using Lagrange mean value theorem, we get
\begin{align*}
C(xe^{1/T})-C(xe^{-1/T})\ll x\log x(e^{1/T}-e^{-1/T})\ll \frac{x\log x}{T}.
\end{align*}
Then we can deduce
\begin{align*}
\sum_{xe^{-1/T}< \ell^{2}w \leq xe^{1/T}} \ell
\ll C(xe^{1/T})-C(xe^{-1/T})+x^{5/9}\log x \ll \frac{x\log x}{T}+x^{5/9}\log x.
\end{align*}
Thus
\begin{align*}
\sum\nolimits_{2}\ll \frac{x^{1+\varepsilon}}{T}+x^{5/9+\varepsilon}.
\end{align*}
For $\sum_{3}$, let $\ell^{2}w=[x]+r$, we have
\begin{align*}
\frac{1}{|\log (x/\ell^{2}w)|}=\frac{1}{|\log x/([x]+r|)}=\frac{1}{|\log x -\log (x-\{x\}+r)|} \ll \frac{x}{r}.
\end{align*}
Hence by Lemma \ref{Lemma_8} and Lagrange mean value theorem, we can deduce
\begin{equation}
\begin{split}
\sum\nolimits_{3}&= \frac{1}{T}\sum_{xe^{1/T}< \ell^{2}w \leq 2x} \tau(w)\ell \ \   \frac{1}{|\log (x/\ell^{2}w)|} \ll \frac{x^{\varepsilon}}{T}\sum_{xe^{1/T}< \ell^{2}w \leq 2x} \ell \ \   \frac{x}{\ell^{2}w-[x]} \\
&= \frac{x^{\varepsilon}}{T}\sum_{xe^{1/T}< \ell^{2}w \leq 2x} \ell \ \   \frac{x}{\ell^{2}w(1-[x]/\ell^{2}w)} \ll \frac{x^{\varepsilon}}{T}\sum_{xe^{1/T}< \ell^{2}w \leq 2x} \ell \\
&\ll \frac{x^{\varepsilon}}{T}\left(x\log x+x^{5/9}\log x\right)\ll \frac{x^{1+\varepsilon}}{T}. \nonumber\\
\end{split}
\end{equation}
Similarly, we can deduce
\begin{align*}
\sum\nolimits_{1} \ll \frac{x^{1+\varepsilon}}{T}.
\end{align*}
Let $b=1+1/\log x$, we immediately complete the proof of Proposition $4.1$. In particular, when $k=2$, take $B_{2}(\sigma)=1/(\sigma-1)$, and when $k=3$, take $B_{3}(\sigma)=(\sigma-1)^{-5}$.
\end{proof}

Then by Proposition $4.1$, we have
\begin{align} \label{sum-4.1}
\sum_{\substack{m_{1}m_{2}\leq x \\ m_{1}m_{2}\in N_{2}}}s(m_{1},m_{2})
&=\frac{1}{2\pi i}\int_{b-iT}^{b+iT}F(s;2)\frac{x^{s}}{s}ds+O\left(\frac{x^{1+\varepsilon}}{T}+x^{5/9+\varepsilon}\right)\nonumber\\
&:=I(x,T;2)+O\left(\frac{x^{1+\varepsilon}}{T}+x^{5/9+\varepsilon}\right),
\end{align}
where
\begin{equation} \label{I(x,T)-1}
I(x,T;2)=\frac{1}{2\pi i}\int_{b-iT}^{b+iT}\zeta(2s-1)H(s;2)\frac{x^{s}}{s}ds.
\end{equation}

\subsection{\bf Evaluation of the integral $I(x,T;2)$}

Now we estimate $I(x,T;2)$. Consider the rectangle domain formed by the four points $s = b\pm iT$, $s = \frac{11}{20}\pm iT$.
In this domain the integrand
\begin{equation*}
g(s;2):= \zeta(2s-1)H(s;2)\frac{x^{s}}{s}
\end{equation*}
has a pole of order 1, namely $s = 1$. By the residue theorem we have
\begin{equation} \label{I(x,T;2)}
I(x,T;2)=J(x,T;2)+H_{1}(x,T;2)+H_{2}(x,T;2)-H_{3}(x,T;2),
\end{equation}
where
\begin{equation}
\begin{split}
J(x,T;2)&:=\Res_{s=1}\zeta(2s-1)H(s;2)\frac{x^{s}}{s},\\
H_{1}(x,T;2)&:=\frac{1}{2\pi i}\int_{11/20+iT}^{b+iT}\zeta(2s-1)H(s;2)\frac{x^{s}}{s}ds,\\
H_{2}(x,T;2)&:=\frac{1}{2\pi i}\int_{11/20-iT}^{11/20+iT}\zeta(2s-1)H(s;2)\frac{x^{s}}{s}ds,\\
H_{3}(x,T;2)&:=\frac{1}{2\pi i}\int_{11/20-iT}^{b-iT}\zeta(2s-1)H(s;2)\frac{x^{s}}{s}ds.\nonumber\\
\end{split}
\end{equation}
Obviously, since $s=1$ is the pole of $g(s;2)$ of degree 1, we have
\begin{equation} \label{residue 1}
\textrm{Res}_{s=1}\zeta(2s-1)H(s;2)\frac{x^{s}}{s}=c_{2}x,
\end{equation}
where $c_{2}$ is a computable constant.

Now we estimate $H_{1}(x,T;2)$. In this case by Lemma \ref{Lemma_3} we have (noting that $|t|\leq T$), uniformly for $\frac{11}{20}\leq \sigma\leq b=1+\frac{1}{\log x}$,
\begin{equation}
g(s;2)=g(\sigma+iT;2)\ll \left|\zeta(2\sigma-1+2iT)\right|\frac{x^{\sigma}}{T}
\ll\frac{x^{\sigma}\log x}{T} T^{\max(1-\sigma,0)}. \nonumber\\
\end{equation}
So we get
\begin{align} \label{H_{1}(x,T;2)}
H_{1}(x,T;2)
&\ll  \frac{\log x}{T}\bigg(\int_{\frac{11}{20}}^{1}x^{\sigma}T^{(1-\sigma)}d\sigma
+\int_{1}^{b}x^{\sigma}d\sigma\bigg)\nonumber\\
&\ll \frac{x\log x}{T}+ \frac{x^{11/20}\log x}{T^{11/20}}\ll \frac{x\log x}{T}.
\end{align}

Similarly, we have
\begin{equation} \label{H_{3}(x,T;2)}
H_{3}(x,T;2)\ll \frac{x\log x}{T}.
\end{equation}

Next we estimate $H_{2}(x,T;2)$.  In this case by Lemma \ref{Lemma_6} we get, with $|t|\leq T$,
\begin{equation}
\begin{split}
H_{2}(x,T;2)
&\ll \int^{T}_{-T}\frac{\left|\zeta(1/10+2it)\right|}{|t|+1}x^{11/20}dt\\
&\ll x^{11/20} \left(1+\int_{1}^{T}\frac{\left|\zeta(1/10+2it)\right|}{t}dt\right) \\
&\ll x^{11/20}+x^{11/20} \int_{1}^{T}\frac{t^{2/5}\left|\zeta(9/10-2it)\right|}{t}dt \\
&\ll x^{11/20}+x^{11/20} \int_{1}^{T}\frac{\left|\zeta(9/10-2it)\right|}{t^{3/5}}dt. \nonumber\\
\end{split}
\end{equation}
From the estimate (\ref{2.1}) and Cauchy's inequality we have
\begin{equation*}
\int_{0}^{T}|\zeta(9/10+iy)|dy\ll T.
\end{equation*}
Let $L_1(v):=\int_0^v|\zeta(9/10+iy)|dy$. Using partial summation we obtain
\begin{align} \label{4.7}
\int_{1}^V \frac{|\zeta(9/10+iv)|}{v^{3/5}}dv
&=\int_{1}^V \frac{dL_1(v)}{v^{3/5}}\ll \frac{L_1(V)}{V^{3/5}}+\int_{1}^V \frac{L_1(v)}{v^{8/5}}\nonumber\\
&\ll V^{2/5}+\int_{1}^V v^{-3/5}dv\ll V^{2/5}.
\end{align}
Hence
\begin{align} \label{H_{2}(x,T;2)}
H_{2}(x,T;2)\ll x^{11/20}T^{2/5}.
\end{align}

Thus from (\ref{I(x,T;2)}), (\ref{residue 1}), (\ref{H_{1}(x,T;2)}), (\ref{H_{3}(x,T;2)}) and (\ref{H_{2}(x,T;2)}) we obtain
\begin{equation} \label{I(x,T)-2}
I(x,T;2)=c_{2}x+O(x^{19/28+\varepsilon})
\end{equation}
by choosing $T=x^{9/28}$, where $c_{2}$ is an absolute constant.

\section{\bf Proof of Theorem \ref{Th_2}}

In this section, we shall prove Theorem \ref{Th_2}. By Proposition $4.1$, we have
\begin{align} \label{sum-4.1 f(s,3)}
\sum_{\substack{m_{1}m_{2}\leq x \\ m_{1}m_{2}\in N_{3}}}s(m_{1},m_{2})
&=\frac{1}{2\pi i}\int_{b-iT}^{b+iT}F(s;3)\frac{x^{s}}{s}ds+O\left(\frac{x^{1+\varepsilon}}{T}+x^{5/9+\varepsilon}\right)\nonumber\\
&:=I(x,T;3)+O\left(\frac{x^{1+\varepsilon}}{T}+x^{5/9+\varepsilon}\right),
\end{align}
where
\begin{equation} \label{I(x,T;3)}
I(x,T;3)=\frac{1}{2\pi i}\int_{b-iT}^{b+iT}\zeta^{4}(3s-1)\zeta(6s-3)H(s;3)\frac{x^{s}}{s}ds.
\end{equation}
Next, we mainly estimate $I(x,T;3)$. Consider the rectangle domain formed by the four points $s = b\pm iT$, $s = \frac{11}{20}\pm iT$.
In this domain the integrand
\begin{equation*}
g(s;3):=\zeta^{4}(3s-1)\zeta(6s-3)H(s;3)\frac{x^{s}}{s}
\end{equation*}
has a pole of order 5, namely $s = 2/3$. By the residue theorem we have
\begin{equation} \label{I(x,T;3)}
I(x,T;3)=J(x,T;3)+H_{1}(x,T;3)+H_{2}(x,T;3)-H_{3}(x,T;3),
\end{equation}
where
\begin{equation}
\begin{split}
J(x,T;3)&:=\Res_{s=2/3}\zeta^{4}(3s-1)\zeta(6s-3)H(s;3)\frac{x^{s}}{s},\\
H_{1}(x,T;3)&:=\frac{1}{2\pi i}\int_{11/20+iT}^{b+iT}\zeta^{4}(3s-1)\zeta(6s-3)H(s;3)\frac{x^{s}}{s}ds,\\
H_{2}(x,T;3)&:=\frac{1}{2\pi i}\int_{11/20-iT}^{11/20+iT}\zeta^{4}(3s-1)\zeta(6s-3)H(s;3)\frac{x^{s}}{s}ds,\\
H_{3}(x,T;3)&:=\frac{1}{2\pi i}\int_{11/20-iT}^{b-iT}\zeta^{4}(3s-1)\zeta(6s-3)H(s;3)\frac{x^{s}}{s}ds.\nonumber\\
\end{split}
\end{equation}

Obviously, since $s=2/3$ is the pole of $g(s;3)$ of degree 5, we have
\begin{equation} \label{residue 2/3}
{\rm Res}_{s=\frac{2}{3}} \zeta^{4}(3s-1)\zeta(6s-3)H(s;3)\frac{x^{s}}{s}=x^{2/3}\sum_{j=0}^{4}a_{j}\log^{j}x,
\end{equation}
where $a_{j} \ (0\leq j\leq 4)$ are computable constants.

Now we estimate $H_{1}(x,T;3)$. In this case by Lemma \ref{Lemma_3} and Lemma \ref{Lemma_4}, we have (noting that $|t|\leq T$), uniformly for $\frac{11}{20}\leq \sigma\leq b=1+\frac{1}{\log x}$,
\begin{align*}
g(s;3)=g(\sigma+iT;3)&\ll |\zeta(3\sigma-1+3iT)|^{4}|\zeta(6\sigma-3+6iT)|\frac{x^{\sigma}}{T} \nonumber\\
&\ll\frac{x^{\sigma}\log^{5} x}{T} T^{\frac{7}{3}\max(2-3\sigma, 0)}.
\end{align*}
So we get
\begin{align} \label{H_{1}(x,T;3)}
H_{1}(x,T;3)
&\ll  \frac{\log^{5} x}{T}\bigg(\int_{\frac{11}{20}}^{2/3}x^{\sigma}T^{\frac{7}{3}(2-3\sigma)}d\sigma
+\int_{2/3}^{b}x^{\sigma}d\sigma\bigg)\nonumber\\
&\ll \frac{x\log^{5} x}{T}+ \frac{x^{11/20}\log^{5} x}{T^{11/60}}\ll \frac{x\log^{5} x}{T}.
\end{align}

Similarly, we have
\begin{equation} \label{H_{3}(x,T;3)}
H_{3}(x,T;3)\ll \frac{x\log^{5} x}{T}.
\end{equation}

Next we estimate $H_{2}(x,T;3)$.  In this case by Lemmas \ref{Lemma_6} we get, with $|t|\leq T$,
\begin{equation}
\begin{split}
H_{2}(x,T;3)
&\ll \int^{T}_{-T}\frac{|\zeta(13/20+3it)|^{4}|\zeta(3/10+6it)|}{|t|+1}x^{11/20}dt\\
&\ll x^{11/20} \left(1+\int_{1}^{T}\frac{|\zeta(13/20+3it)|^{4}|\zeta(3/10+6it)|}{t}dt\right) \\
&\ll x^{11/20}+x^{11/20} \int_{1}^{T}\frac{t^{1/5}|\zeta(13/20+3it)|^{4}|\zeta(7/10-6it)|}{t}dt \\
&\ll x^{11/20}+x^{11/20} \int_{1}^{T}\frac{|\zeta(13/20+3it)|^{4}|\zeta(7/10-6it)|}{t^{4/5}}dt. \nonumber\\
\end{split}
\end{equation}
From the estimates (\ref{2.1}), (\ref{2.2}) and Cauchy's inequality we have
\begin{align*}
&\ \ \ \int_{0}^{T}|\zeta(13/20+3iy)|^{4}|\zeta(7/10-6iy)|dy \\
&\ll \bigg(\int_{0}^{T}|\zeta(13/20+3iy)|^{8}dy\bigg)^{1/2}\bigg(\int_{0}^{T}|\zeta(7/10-6iy)|^{2}dy\bigg)^{1/2} \\
&\ll T^{1+\varepsilon}.
\end{align*}
Let $L_2(v):=\int_0^v|\zeta(13/20+3iy)|^{4}|\zeta(7/10-6iy)|dy$. Using partial summation we obtain
\begin{align} \label{L 2 V}
&\ \ \ \int_{1}^V \frac{|\zeta(13/20+3iv)|^{4}|\zeta(7/10-6iv)|}{v^{4/5}}dv\nonumber\\
&=\int_{1}^V \frac{dL_2(v)}{v^{4/5}}\ll \frac{L_2(V)}{V^{4/5}}+\int_{1}^V \frac{L_2(v)}{v^{9/5}}\nonumber\\
&\ll V^{1/5+\varepsilon}+\int_{1}^V v^{-4/5+\varepsilon}dv\ll V^{1/5+\varepsilon}.
\end{align}
Hence
\begin{align} \label{H_{2}(x,T;3)}
H_{2}(x,T;3)\ll x^{11/20}T^{1/5+\varepsilon}.
\end{align}

Thus from (\ref{I(x,T;3)}), (\ref{residue 2/3}), (\ref{H_{1}(x,T;3)}), (\ref{H_{3}(x,T;3)}) and (\ref{H_{2}(x,T;3)}) we obtain
\begin{equation} \label{I(x,T;3) 3}
I(x,T;3)=x^{2/3}\sum_{j=0}^{4}a_{j}\log^{j}x+O(x^{5/8+\varepsilon})
\end{equation}
by choosing $T=x^{3/8}$, where $a_{j} \ (0\leq j\leq 4)$ are absolute constants.

\section*{Acknowledgements}
The authors would like to appreciate the referee for his/her patience in refereeing this paper. This work is supported by the National Natural Science Foundation of China (Grant Nos. 12301006, 12471009) and Beijing Natural Science Foundation (Grant No. 1242003).


\begin{thebibliography}{HD}


\normalsize
\baselineskip=17pt



\bibitem{H} M.~Hampejs, N.~Holighaus, L. T\'{o}th and C.~Wiesmeyr, {\it Representing and counting the subgroups of the group $\mathbb{Z}_{m}\times\mathbb{Z}_{n}$}, J. Numbers, Article ID 491428 (2014).

\bibitem{DR} D.~R.~Heath-Brown, \emph{The Pjateckii-Sapiro Prime Number Theorem}, J. Number Theory {16} (1983), 242--266.

\bibitem{Iv} A.Ivi\'{c}, \emph{The Riemann Zeta-Function. Theory and Applications},  Wiley,  New York, 1985.

\bibitem{NT} W.~G.~Nowak and L.~T\'{o}th, \emph{On the average number of subgroups of the group ${\Bbb Z}_{m}\times {\Bbb Z}_{n}$},
Int. J. Number Theory {10} (2014), 363--374.

\bibitem{PP} Chendong~Pan and Chenbiao~Pan, \emph{Foundations of Analytic Number Theory} (in Chinese),
Science Press: Beijing, 1990.

\bibitem{SL} Y. Sui and D. Liu, \emph{On the average number of subgroups of the groups ${\Bbb Z}_{m}\times {\Bbb Z}_{n}$ with $mn\leq x$},
 J. Number Theory {216} (2020), 264--279.


\bibitem{TZh} L. T\'{o}th and W.~G.~Zhai, {\it On the error term concerning the number of subgroups of the group $\mathbb{Z}_{m}\times\mathbb{Z}_{n}$ with $m, n \leq x$},  Acta Arith. {\bf 183}(3) (2018), 285--299.


\end{thebibliography}
\end{document}